\newcommand{\eps}{\varepsilon}
\newcommand{\Sp}{\mathfrak{S}}
\DeclareMathOperator{\vp}{v.p.}
\DeclareMathOperator{\OO}{O}
\DeclareMathOperator{\supp}{supp}
\DeclareMathOperator{\Ann}{Ann}
\DeclareMathOperator{\clos}{clos}
\DeclareMathOperator{\Ext}{Ext}
\newcommand{\R}{\mathcal{R}}
\newcommand{\Sph}{\mathrm{S}}
\newcommand{\Si}{_{_\Sigma}\!}
\newcommand{\df}{\buildrel\mathrm{def}\over=}
\newtheorem{Def}{Definition}
\newtheorem{Rem}{Remark}
\newtheorem{Conj}{Conjecture}
\renewcommand{\leq}{\leqslant}
\renewcommand{\geq}{\geqslant}
\newcommand{\scalprod}[2]{\langle{#1},{#2}\rangle}
\theoremstyle{plain}\newtheorem{Th}{Theorem}
\theoremstyle{plain}
\theoremstyle{plain}
\theoremstyle{plain}\newtheorem{Le}{Lemma}
\let\comma=,
\begin{document}
\title{Functions whose Fourier transform vanishes on a set}
\author{Dmitriy M. Stolyarov\thanks{Supported by RFBR grant N 14-01-00198.}}
\maketitle
\begin{abstract}
We study the subspaces of~$L_p(\mathbb{R}^d)$ that consist of functions whose Fourier transforms vanish on a smooth surface of codimension~$1$. We show that a subspace defined in such a manner coincides with the whole~$L_p$ space for~$p > \frac{2d}{d-1}$. We also prove density of smooth functions in such spaces when~$p < \frac{2d}{d-1}$ for specific cases of surfaces and give an equivalent definition in terms of differential operators.
\end{abstract}

\section{Definitions}
% Here we give basic definitions, explain the motivation, and state the results and conjectures.
\begin{Def}\label{TheSpace}
Let~$\Sigma$ be a closed subset of~$\mathbb{R}^d$, let~$p \in [1,\infty]$. Define the space~$\Si L_p$ by the rule
\begin{equation*}
\Si L_p = \clos_{L_p}\Big(\big\{f \in \mathcal{S}(\mathbb{R}^d)\,\big|\; \forall \xi \in \Sigma\quad \hat{f}(\xi) = 0\big\}\Big).
\end{equation*}
\end{Def}
Here and in what follows, we denote the class of Schwartz functions by~$\mathcal{S}$ and the Fourier transform of~$f$ by~$\hat{f}$ or by~$\mathcal{F}[f]$.
There are many specific spaces that can be described as~$\Si L_p$ for various choices of~$\Sigma$, for example, the Paley--Wiener spaces ($\Sigma$ is the complement of a compact set,~$p=2$). In the present article, we restrict our attention to the case where~$\Sigma$ is a~$C^{\infty}$-smooth compact submanifold of~$\mathbb{R}^d$. Moreover, we assume that~$\dim \Sigma = d-1$. Our main example is the unit sphere~$\Sph^{d-1}$.

\begin{Def}\label{RestrictionOperator}
Let~$\Sigma$ be a closed subset of~$\mathbb{R}^d$. We call the operator~$\R_{\Sigma}\colon\mathcal{S}(\mathbb{R}^d) \to C(\Sigma)$ acting by the rule
\begin{equation*}
\R_{\Sigma}[f] = \hat{f}|_{_{\Sigma}}
\end{equation*}
the restriction operator associated with~$\Sigma$. We say that the statement~$\R(\Sigma,p,q)$ holds true if~$\R_{\Sigma}$ admits a continuous extension as an~$L_p(\mathbb{R}^d)\to L_q(\Sigma)$ operator. If~$\R(\Sigma,p,1)$ holds true, we say that~$\Sigma$ admits the~$L_p$ restriction.
\end{Def}
The definition needs two comments. First, to define~$L_q(\Sigma)$, we need to equip~$\Sigma$ with a measure. We always equip it with the natural Lebesgue measure, which we denote by~$\sigma$. Second, the number one in the definition of the~$L_p$ restriction property is not random:~$\R(\Sigma,p,q_1)$ leads to~$\R(\Sigma,p,q_2)$ provided~$q_1 > q_2$. We ask for the weakest natural restriction estimate. 

It is an extremly difficult problem to decide whether~$\R(\Sigma,p,q)$ holds true or not. The most well-studied case where~$\Sigma$ is convex and its curvature does not vanish is still far from being completely solved (of course, we omit the trivial case where~$\Sigma$ is a hyperplane and there is no restrictions except~$p=1$). We refer the reader to the books~\cite{Gr},~\cite{T} for the survey and the basics of the subject. We will not use the results of the restriction theory\footnote{However, some ideas of this theory may be found in Section~\ref{Sd=2}.}, we will usually assume that~$\Sigma$ admits the restriction property. 

One can also define the space of functions whose Fourier transform vanishes on~$\Sigma$ by the formula
\begin{equation}\label{TildaSpace}
\Si\tilde{L}_p = \big\{f \in L_p\,\big|\; \R_{\Sigma}[f] = 0 \big\}.
\end{equation}
Surely, such a definition needs~$\Sigma$ to admit the~$L_p$ restriction. 

There are two motivations for investigating such type spaces. The first one comes from the fact that the equation~$\R_{\Sph^{d-1}}[f] = 0$ may be considered as a natural Fredholm condition for the Helmholtz equation, see~\cite{G}. Second, such spaces arise if one considers inequalities in the style of Sobolev embedding theorems with differential operators. Let us consider an example from~\cite{S}. The author asked for which collections of parameters does the inequality
\begin{equation}\label{Sobolev}
\|f\|_{W_q^{\alpha,\beta}} \lesssim \|(\partial_1^k - \sigma \partial_2^l)f\|_{L_p}
\end{equation}
hold true for all functions~$f \in \mathcal{S}(\mathbb{R}^2)$ uniformly. The Sobolev potential norm on the left is defined by the formula
\begin{equation*}
\|f\|_{W_q^{\alpha,\beta}} = \Big\|\mathcal{F}^{-1}\big[\mathcal{F}[f](\xi,\eta)|\xi|^{\alpha}|\eta|^{\beta}\big] \Big\|_{L_q}.
\end{equation*}
The homogeneity considerations say that
\begin{equation}\label{HomogeneityCondition}
\frac{\alpha}{k} + \frac{\beta}{l} = 1 - \Big(\frac{1}{p} - \frac{1}{q}\Big)\Big(\frac{1}{k} + \frac{1}{l}\Big).
\end{equation}

Since~$f$ is smooth, the function~$(\partial_1^k - \sigma \partial_2^l)f$ lies in~$\Si L_p$, where
\begin{equation*}
\Sigma = \Big\{(\xi,\eta) \in \mathbb{R}^2\,\Big|\; (2\pi i \xi)^k = \sigma(2\pi i \eta)^l\Big\}.
\end{equation*}
The problem is interesting only when the operator~$(\partial_1^k - \sigma \partial_2^l)$ is not elliptic and~$k\ne l$, thus~$\Sigma$ is a curve in the plane. It does not satisfy the assumptions we impose on our surfaces in the present article (it is not compact and may not be  a smooth submanifold). 

However, after an application of the Littlewood--Paley theory and the homogeneity considerations, the problem may be localized in spectrum, i.e. we may assume that~$\hat{f}$ is supported in a small ball centered at some fixed point on~$\Sigma$ (here we omit the details). All in all, to prove inequality~\eqref{Sobolev}, one has to investigate the action of the Fourier multiplier with the symbol~$\vp \frac{1}{(2\pi i \xi)^k - (2\pi i \eta)^l}$ as an~$\Si L_p \rightarrow L_q$ operator. Note that the operator can be considered as a Bochner--Riesz type operator of order~$-1$. The continuity of such type operators on Lebesgue spaces is well studied in~$\mathbb{R}^2$, see~\cite{B}. In our case, it says that inequality~\eqref{Sobolev} holds true if~\eqref{HomogeneityCondition} is true,~$\frac{1}{p} - \frac{1}{q} \geq \frac23$,~$p < \frac43$, and~$q >4$. The classical Knapp example construction shows that the condition~$\frac{1}{p} - \frac{1}{q} \geq \frac23$ is necessary for~\eqref{Sobolev} to hold. Substitution of functions~$f$ that converge to the surface measure on~$\Sigma$ leads us to the necessity of the condition~$p < \frac43$. It was conjectured in~\cite{S} that the condition~$q > 4$ may be omitted if one considers inequality~\eqref{Sobolev}.

\begin{Th}[Easy corollary of Theorem~$5$ in~\cite{G}]\label{SobolevTh}
Suppose that~$1<p,q < \infty$. Inequality~\eqref{Sobolev} holds true if and only if~\eqref{HomogeneityCondition} holds true,~$\frac{1}{p} - \frac{1}{q} \geq \frac23$, and~$p < \frac43$.
\end{Th}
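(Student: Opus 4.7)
All three conditions in the necessary direction have been sketched in the text preceding the theorem: \eqref{HomogeneityCondition} via the anisotropic dilation $f\mapsto f(\lambda^{1/k}\cdot,\lambda^{1/l}\cdot)$; the bound $\frac{1}{p}-\frac{1}{q}\geq\frac{2}{3}$ via a Knapp example adapted to the anisotropic geometry of~$\Sigma$; and $p<\frac{4}{3}$ via substitution of functions~$f$ whose Fourier transforms approximate the surface measure on~$\Sigma$. So my effort should go entirely into the sufficiency direction.

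For sufficiency, put $g := (\partial_1^k - \sigma\partial_2^l)f$, so that $\hat g = P\,\hat f$ with $P(\xi,\eta) := (2\pi i\xi)^k - \sigma(2\pi i\eta)^l$. Then $\hat g$ vanishes on $\Sigma=\{P=0\}$, so $g\in\Si L_p$. Inequality \eqref{Sobolev} is thus equivalent to boundedness of the Fourier multiplier~$T$ with symbol $|\xi|^\alpha|\eta|^\beta / P(\xi,\eta)$ (interpreted in the principal value sense along~$\Sigma$) as an operator $\Si L_p \to L_q$.

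I would then decompose $T$ via a Littlewood--Paley partition adapted to the anisotropic scaling $(\xi,\eta)\mapsto (\lambda^{1/k}\xi,\lambda^{1/l}\eta)$, under which $P$ is homogeneous of degree~$1$. Away from the origin the surface $\Sigma$ is smooth and of nonvanishing curvature on each dyadic annulus, and \eqref{HomogeneityCondition} is precisely the scaling condition that makes every dyadic piece a rescaled copy of a single Bochner--Riesz-type operator of order~$-1$ attached to a fixed compact arc of~$\Sigma$. For such operators \cite{B} yields the required $L_p\to L_q$ bound but only under the additional restriction $q>4$; the content of Theorem~5 of \cite{G} is that on the subspace $\Si L_p$ the restriction $q>4$ can be removed, because the vanishing of $\hat g$ on~$\Sigma$ eliminates exactly the piece of the kernel responsible for that obstruction. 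The dyadic estimates are then reassembled by the standard Littlewood--Paley square-function argument, which is the only place where the assumption $1<p,q<\infty$ is used.

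The hard step is the sufficiency direction and, inside it, identifying the localized operator obtained after anisotropic rescaling as an instance of Theorem~5 of \cite{G}: one has to match the normalization of the Bochner--Riesz symbol, check that the scaling constants produced by \eqref{HomogeneityCondition} make the dyadic sum unconditionally convergent, and verify that the vanishing hypothesis defining $\Si L_p$ survives the Littlewood--Paley localization. Once this identification has been made, the theorem does reduce to an ``easy corollary'' of \cite{G}.
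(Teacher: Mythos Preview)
Your proposal is correct and follows essentially the same route the paper sketches: the paper's entire argument for Theorem~\ref{SobolevTh} is the paragraph after its statement, where it says that the reasoning of~\cite{G} for the sphere carries over to general convex~$\Sigma$ with non-vanishing curvature, and that Littlewood--Paley theory then reduces the global anisotropic problem to this localized Bochner--Riesz-type estimate. Your outline expands this sketch with the right details (anisotropic dilation, localization to a compact arc, identification with the order~$-1$ Bochner--Riesz operator, and reassembly by square functions), so there is nothing to add.
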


The paper~\cite{G} deals with the Bochner--Riesz operators of negative order, i.e.~$\Sigma = \Sph^{d-1}$. It is proved there that the continuity properties of these operators as~$\Si L_p \to L_q$ operators are better than their continuity properties as~$L_p \to L_q$ operators (i.e. there are many pairs~$(p,q)$ such that the Bochner--Riesz operator is not an~$L_p\to L_q$ operator, but it is an~$\Si L_p \to L_q$ operator). Using the same reasoning, one can prove the same results for general convex~$\Sigma$ with non-vanishing curvature, and then, using the Littlewood--Paley theory, derive Theorem~\ref{SobolevTh}. We note that the phenomenon has been noticed long time ago, see~\cite{GS}.

The paper~\cite{G} deals with~$\Si\tilde{L}_p$ spaces\footnote{Moreover,~$\Si\tilde{L}_p$ is defined with the help of~$\R(\Sigma,p,2)$. As we have said, the definition with~$\R(\Sigma,p,1)$ is more general.} instead of~$\Si L_p$. It is natural to conjecture that~$\Si\tilde{L}_p =  \Si\!\!L_p$ provided~$\Sigma$ admits the~$L_p$ restriction and~$\Si L_p=L_p$ provided it does not. In other words, the Schwartz functions are dense in~$\Si\tilde{L}_p$.

\begin{Th}\label{DensityWithRO}
Let~$p \in (1,\infty)$. Suppose that~$\Sph^{d-1}$ admits the~$L_p$ restriction. Then~$_{_{\Sph^{\scriptscriptstyle d-1}}}\!\tilde{L}_p = _{_{\Sph^{\scriptscriptstyle d-1}}}\!\!\!L_p$.
\end{Th}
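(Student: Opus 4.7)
The plan is to argue by duality. Since $\Si L_p \subseteq \Si\tilde L_p$ is automatic, Hahn--Banach reduces the theorem to the reverse inclusion of annihilators, namely $\Ann(\Si L_p) \subseteq \Ann(\Si\tilde L_p)$ in $L_{p'}(\mathbb{R}^d)$. First I would characterise $\Ann(\Si L_p)$: using the Parseval pairing $\int fg = \langle \hat g,\check f\rangle$ together with the reflection symmetry $-\Sph^{d-1} = \Sph^{d-1}$, the condition $\int fg = 0$ for all Schwartz $f$ with $\hat f|_\Sigma = 0$ is equivalent to the statement that $\hat g$ (viewed as a tempered distribution) annihilates every Schwartz function vanishing on $\Sigma$. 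The standard structure theorem for distributions supported on a smooth compact hypersurface then forces
\[
\hat g = H\cdot d\sigma \quad\text{for some}\quad H \in \mathcal D'(\Sigma),
\]
in other words $g = E[H]$, where $E\colon H\mapsto\mathcal F^{-1}(H\,d\sigma)$ is the extension operator adjoint to $\R_\Sigma$.

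Second, I would approximate the given $g = E[H] \in L_{p'}$ by $E[H_n]$ with $H_n \in C^\infty(\Sigma)$, exploiting the rotation symmetry of the sphere. For a smooth approximate identity $\phi_n$ on the group $SO(d)$, set $H_n(\xi) = \int_{SO(d)}\phi_n(U)\,H(U^{-1}\xi)\,dU$; group-averaging a distribution by a smooth kernel yields a smooth function, so $H_n\in C^\infty(\Sigma)\subset L_\infty(\Sigma)$. A Fubini computation together with the rotation invariance of $d\sigma$ produces the crucial commutation
\[
E[H_n](x) = \int_{SO(d)}\phi_n(U)\,E[H](Ux)\,dU,
\]
and because $p > 1$ makes $p' < \infty$, rotations act strongly continuously on $L_{p'}(\mathbb{R}^d)$. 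Hence $E[H_n]\to E[H] = g$ in $L_{p'}$ as $\phi_n$ concentrates at the identity.

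Finally, each $E[H_n]$ annihilates $\Si\tilde L_p$: the adjoint identity
\[
\int_{\mathbb{R}^d} f\cdot E[H_n]\,dx = \int_\Sigma \R_\Sigma f(\xi)\,H_n(-\xi)\,d\sigma(\xi)
\]
is rigorous for every $f \in L_p$ because the $L_p$ restriction hypothesis supplies $\R_\Sigma f \in L_1(\Sigma)$ while $H_n \in L_\infty(\Sigma)$, and both sides vanish when $\R_\Sigma f = 0$. Taking $n\to\infty$ then delivers $g \in \Ann(\Si\tilde L_p)$, which completes the argument.

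I expect the main technical obstacle to lie in the first step: the claim that a tempered distribution annihilating the ideal of Schwartz functions that vanish on $\Sigma$ must be a surface-type pushforward $H\,d\sigma$ and cannot carry any normal-derivative corrections $\partial_\nu^k\mu_k$, $k \geq 1$. This is standard distribution theory, handled by local flattening of $\Sigma$ and Taylor expansion in the normal direction, but one must invoke it carefully so that the resulting $H$ is a genuine distribution on $\Sigma$ that can in turn be mollified by the group action to yield smooth $H_n$ rather than another distribution.
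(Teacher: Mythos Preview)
Your proposal is correct and follows essentially the same route as the paper: characterise $\Ann(\Si L_p)$ via Theorem~\ref{Annihilator} as functions with $\hat g=\Pi_{\Sph^{d-1}}^*[\zeta]$, mollify $\zeta$ by convolution over the rotation group to obtain smooth $\zeta_n$, show $g_n\to g$ in $L_{p'}$, and then pair with $f\in\Si\tilde L_p$ using the $L_p$ restriction hypothesis. The only cosmetic differences are that the paper uses $\OO_d$ rather than $SO(d)$ and verifies $g_n\to g$ explicitly via polar coordinates and dominated convergence rather than invoking strong continuity of the rotation action; also, in the last step the paper approximates $f$ by Schwartz functions instead of directly applying the extended adjoint identity, but these amount to the same thing.
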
 

It is well known and easy to show that~$\R(\Sigma,p,q)$ cannot be true if~$p > \frac{2d}{d+1}$ (see, e.g.~\cite{T}). Thus,~$\Sigma$ cannot admit the~$L_p$ restriction for~$p > \frac{2d}{d+1}$. 
\begin{Th}\label{DensityWithoutMO}
For any~$p \in (\frac{2d}{d+1},\infty)$, we have~$\Si L_p=L_p$.
\end{Th}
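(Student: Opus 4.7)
The plan is to argue by Hahn--Banach duality. Showing that $\Si L_p = L_p$ is equivalent to showing that every $g \in L_{p'}(\mathbb{R}^d)$ (with $p' = p/(p-1) < \frac{2d}{d-1}$, the conjugate exponent) annihilating the subspace $V = \{f \in \mathcal{S} \mid \hat{f}|_{\Sigma} = 0\}$ must vanish. By Parseval, this annihilation condition translates to $\hat{g}$, regarded as a tempered distribution, being supported on $\Sigma$. Thus the theorem reduces to the following assertion: if $g \in L_{p'}(\mathbb{R}^d)$, $p' < \frac{2d}{d-1}$, and $\supp(\hat{g}) \subset \Sigma$, then $g \equiv 0$.

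To prove this reduced statement I would exploit the structure theorem for distributions supported on a smooth codimension~$1$ submanifold: $\hat g$ has finite order~$N$, and in a local chart in which $\Sigma = \{\xi_d = \phi(\xi')\}$ it takes the form $\hat{g} = \sum_{j=0}^N f_j(\xi')\, \delta^{(j)}(\xi_d - \phi(\xi'))$ for smooth coefficients~$f_j$. Taking the inverse Fourier transform yields a representation of $g$ as a sum of monomials $x_d^j$ multiplied by oscillatory integrals
\begin{equation*}
I_j(x) = \int f_j(\xi')\, e^{\pm 2\pi i (x' \cdot \xi' + x_d \phi(\xi'))}\, d\xi'.
\end{equation*}

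The heart of the argument is a lower bound via stationary phase. At a point of $\Sigma$ of nonvanishing Gaussian curvature, each $I_j$ admits the leading asymptotic $|x|^{-(d-1)/2} a_j(x/|x|)$ as $|x|\to\infty$ in generic directions, with $a_j$ smooth and not identically zero whenever $f_j\neq 0$. Consequently, if the top-order coefficient $f_N$ does not vanish, $|g(x)|^{p'}$ behaves like $|x|^{p'(N-(d-1)/2)}$ on a set of positive density on $S^{d-1}$, and $\int |g|^{p'}\,dx$ diverges whenever $p' \leq \frac{2d}{d-1-2N}$. Since this range always contains $p' < \frac{2d}{d-1}$, the assumption $g \in L_{p'}$ forces $f_N = 0$; peeling off $f_N$ and iterating in $N$, every $f_j$ must vanish, whence $g \equiv 0$.

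The principal obstacle is to globalize this local analysis, because $\hat g$ need not be concentrated in a single chart and contributions from different parts of $\Sigma$ could in principle interfere. I plan to handle this by a convolution--localization: choose a Schwartz function whose Fourier transform is a small bump $\phi$ near a chosen point $\xi_0 \in \Sigma$ of nondegenerate Gaussian curvature (such a point exists on any closed compact hypersurface, e.g.\ at an extremum of a linear functional on $\Sigma$). Then $g * \check\phi \in L_{p'}$ by Young's inequality, and its Fourier transform $\phi \cdot \hat g$ is concentrated near $\xi_0$ inside $\Sigma$, so the local stationary-phase argument applies and forces $\phi\cdot\hat g = 0$. Varying $\xi_0$ exhausts the open set where the curvature is nondegenerate, while at points of degenerate curvature the relevant oscillatory integrals decay only more slowly, giving an a fortiori stronger $L_{p'}$ obstruction, hence $\hat g = 0$ everywhere.
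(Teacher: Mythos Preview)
Your duality reduction is correct, and for \emph{smooth} densities $f_j$ supported near a point of nonvanishing Gaussian curvature the stationary-phase lower bound does yield exactly the $L_{p'}$ obstruction you describe. The gap is at the first step of the local analysis: the structure theorem for distributions supported on a hypersurface gives coefficients $f_j$ that are \emph{distributions} on $\Sigma$, not smooth (or even locally integrable) functions. For a distributional amplitude the pairing $I_j(x)=\langle f_j,\,e^{2\pi i(x'\cdot\xi'+x_d\phi(\xi'))}\rangle$ is still defined, but the stationary-phase asymptotic $|x|^{-(d-1)/2}a_j(x/|x|)$, with $a_j$ essentially the value of $f_j$ at the critical point, is meaningless; think of $f_0$ a derivative of a point mass. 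Your Fourier-side localization $\hat g\mapsto\phi\cdot\hat g$ shrinks the support of the $f_j$ but does not regularize them. The paper confronts exactly this obstacle: after invoking Theorem~\ref{Annihilator} to reduce to the single layer $N=0$, it spends the first half of the proof showing, via a hyperplane-slicing and Plancherel argument, that the coefficient $\zeta$ actually lies in $L_2(\Sigma)$. This regularity step is the heart of the proof, not a formality.

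A secondary point: Theorem~\ref{DensityWithoutMO} is stated for an arbitrary smooth compact hypersurface with no curvature hypothesis, so $\Sigma$ may contain flat pieces where there is no stationary-phase expansion at all; your remark that degenerate points give an ``a fortiori stronger obstruction'' is heuristically reasonable but is not an argument. The paper's route avoids curvature entirely: once $\zeta\in L_2(\Sigma)$ is established, it shows that the energy integral $I(\zeta\,d\sigma,\alpha)=\int|\mathcal F^{-1}[\zeta\,d\sigma]|^2(1+|x|)^{\alpha-d}\,dx$ is finite for some $\alpha>d-1$, and then applies a signed-measure Frostman lemma (Lemma~\ref{Frostman}) to conclude $\dim(\zeta\,d\sigma)>d-1$, which contradicts $\supp(\zeta\,d\sigma)\subset\Sigma$. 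That dimension argument uses only that $\Sigma$ has Hausdorff dimension $d-1$, nothing about its second fundamental form.
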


The two theorems describe the situation for the case~$d=2, \Sigma = \Sph^1$ for all~$p \in (1,\infty)$ except for~$p=\frac43$. Using some specific geometric properties of the circumference, we can deal with the endpoint.
\begin{Th}\label{d=2}
Let~$\Sigma$ be a convex closed curve in the plane, suppose its curvature does not vanish. Then~$\Si L_{\frac43} = L_{\frac43}$.
\end{Th}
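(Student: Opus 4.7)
By Hahn-Banach, $\Si L_{4/3}=L_{4/3}$ is equivalent to the triviality of the annihilator $(\Si L_{4/3})^\perp$ inside $L_4(\mathbb{R}^2)=L_{4/3}^*$. A function $g\in L_4$ annihilates $\Si L_{4/3}$ iff $\langle g,f\rangle=0$ for every $f\in\mathcal{S}$ with $\hat f|_\Sigma=0$, which by Parseval amounts to saying that $\hat g$, as a tempered distribution, is supported on~$\Sigma$. The theorem thus reduces to the following claim: \emph{if $g\in L_4(\mathbb{R}^2)$ and $\supp\hat g\subset\Sigma$, then $g=0$.}

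\textbf{Structure and stationary phase.} Compactness of $\Sigma$ makes $\hat g$ a compactly supported distribution of some finite order $N$, and by the Paley-Wiener-Schwartz theorem $g$ is the restriction of an entire function of exponential type, in particular smooth. Picking a smooth partition of unity adapted to $\Sigma$ and working in local coordinates $(\xi,\eta)$ for which $\Sigma$ is a graph $\{\eta=\phi(\xi)\}$ with $\phi''\neq 0$, the localized distribution admits the canonical normal expansion $\sum_{j=0}^{N}h_j(\xi)\,\partial_\eta^j\delta(\eta-\phi(\xi))$, where the $h_j$ are distributions on $\Sigma$ and the principal symbol $h_N$ is invariantly defined. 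Inverting the Fourier transform chart by chart, after regularizing the $h_j$ by convolution along $\Sigma$ and passing to the limit in the weak sense, the method of stationary phase yields, for every direction $\omega\in S^1$,
\begin{equation*}
g(r\omega)=r^{N-\frac12}\bigl(a_+(\omega)e^{2\pi ir\Phi_+(\omega)}+a_-(\omega)e^{2\pi ir\Phi_-(\omega)}\bigr)+\OO(r^{N-\frac32}),\qquad r\to\infty,
\end{equation*}
where $\xi_\pm(\omega)\in\Sigma$ are the two points with outward normals $\pm\omega$ (existing, distinct and smoothly depending on $\omega$ by convexity of $\Sigma$ combined with non-vanishing curvature), $\Phi_\pm(\omega)=\omega\cdot\xi_\pm(\omega)$, and $a_\pm(\omega)$ is a non-vanishing smooth multiple of the value of $h_N$ at $\xi_\pm(\omega)$.

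\textbf{$L_4$ contradiction and induction.} Expanding $|a_+e^{2\pi ir\Phi_+}+a_-e^{2\pi ir\Phi_-}|^4$ separates a non-negative $r$-independent piece, $|a_+|^4+4|a_+|^2|a_-|^2+|a_-|^4$, from oscillating terms with nonzero frequencies $\pm(\Phi_+-\Phi_-)$ and $\pm 2(\Phi_+-\Phi_-)$; here $\Phi_+(\omega)\neq\Phi_-(\omega)$ because the two critical points $\xi_\pm(\omega)$ are distinct (even $\Phi_+>0>\Phi_-$ uniformly in $\omega$ after translating the origin inside $\Sigma$). Integration by parts in $r$ dominates each oscillating contribution by $\OO(r^{4N-1}/|\Phi_+-\Phi_-|)$ at the right endpoint, while the diagonal contribution integrates to the divergent $\int_1^\infty r^{4N-1}\,dr$ for every $N\ge 0$. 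Hence $g\notin L_4$ unless $a_\pm\equiv 0$ a.e.\ on $S^1$, and by invertibility of the Gauss map this forces $h_N\equiv 0$ as a distribution on $\Sigma$, lowering the order of $\hat g$ by one. An induction on $N$ produces $\hat g=0$ and so $g=0$. The main technical difficulty is making stationary phase honest for distributional coefficients $h_j$ (handled by smoothing along $\Sigma$ and a duality argument that uses the $L_4$-bound on $g$ to pair meaningfully with the regularizations), and checking that contributions from different charts cannot conspire to cancel in the directions they target; this is precisely where the two-dimensional hypothesis enters essentially, since a smooth closed convex plane curve with non-vanishing curvature produces exactly two transversally distinct critical phase points in every direction $\omega\in S^1$.
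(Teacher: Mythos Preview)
Your reduction to showing that no nonzero $g\in L_4(\mathbb{R}^2)$ can have $\hat g$ supported on $\Sigma$ is correct (the ``iff'' in your first paragraph should be a one-sided implication, since a normal derivative of the surface measure is supported on $\Sigma$ without annihilating all $\hat f$ that vanish on $\Sigma$; but you only use the necessary direction). The route you propose---reading off the large-$r$ behaviour of $g$ via stationary phase and showing the leading term is too big for $L_4$---is genuinely different from the paper's. The paper works entirely on the Fourier side: it rewrites $g\in L_4$ as $\hat g*\hat g\in L_2$, uses the change of variables $(s,t)\mapsto(s+t,h(s)+h(t))$ on $\Sigma\times\Sigma$ to first prove that the density $\zeta$ lies in $L_2(\Sigma)$, and then computes the convolution as an explicit function carrying a $|h'(s)-h'(t)|^{-1}$ singularity along the diagonal, visibly not in $L_2$.

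The substantive gap is exactly where you flag it. The expansion $g(r\omega)=r^{N-1/2}\bigl(a_+e^{2\pi ir\Phi_+}+a_-e^{2\pi ir\Phi_-}\bigr)+O(r^{N-3/2})$, with $a_\pm(\omega)$ a nonvanishing multiple of the \emph{value} of $h_N$ at $\xi_\pm(\omega)$, requires $h_N$ to be at least continuous; for a genuine distribution on $\Sigma$ there is no such value, and the asymptotic simply fails (take $h_0=\delta_{\xi_0}$: then $g(x)=e^{2\pi ix\cdot\xi_0}$ has no decay whatsoever). Smoothing the $h_j$ along $\Sigma$ does not obviously help: that operation is not a Fourier multiplier on $\mathbb{R}^2$, so the regularised $g_\varepsilon$ have no reason to stay in $L_4$, and weak limits do not transport pointwise asymptotics or your divergent-integral computation. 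The ``duality argument using the $L_4$-bound'' is not spelled out, and I do not see how to carry it through without first establishing some a priori regularity of the coefficients---which is precisely what the paper's convolution step supplies, and which is the heart of the endpoint case. If you could independently show that $g\in L_4$ forces $h_N\in L_2(\Sigma)$ (or continuous), your angular-integration idea might then be salvaged; as written, the passage from distributional $h_j$ to the pointwise stationary-phase expansion is the missing piece.
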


The proofs of all three theorems are based on duality, so we start with the description of the annihilators of our spaces in~$L_q$ ($q$ is the adjoint exponent,~$\frac{1}{p} + \frac{1}{q} = 1$) in Section~\ref{Annih}. After that we prove the main theorems in Sections~\ref{Rot},~\ref{dim}, and~\ref{Sd=2}. Finally, we give another definition of~$\Si L_p$ in Section~\ref{SPolynom} and prove its equivalence with the initial one. The last section contains several conjectures.

I am grateful to Michael Goldberg, Sergey Kislyakov, and  Andreas Seeger for discussions of the material of this article. 

\section{Description of annihilator}\label{Annih}
% We describe the annihilator. We need several auxilliary lemmas for that. 
\begin{Def}\label{RealRestrictionOperator}
Define the operator~$\Pi_{\Sigma}\colon \mathcal{S}(\mathbb{R}^d) \to C^{\infty}(\Sigma)$ by the rule
\begin{equation*}
\Pi_{\Sigma}[f] = f|_{\Sigma}.
\end{equation*}
\end{Def}
\begin{Le}\label{Extension}
The operator~$\Pi_{\Sigma}$ admits right inverse.
\end{Le}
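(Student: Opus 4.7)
The plan is to construct the right inverse explicitly by extending functions from $\Sigma$ to $\mathbb{R}^d$ via a tubular neighborhood, and then cutting off to ensure the extension is Schwartz.

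First, since $\Sigma$ is a compact $C^\infty$-smooth submanifold of codimension one in $\mathbb{R}^d$, the tubular neighborhood theorem provides an open neighborhood $U$ of $\Sigma$ together with a smooth retraction $\pi\colon U \to \Sigma$. Concretely, one can pick a continuous unit normal field $\nu$ (locally, if $\Sigma$ is non-orientable) and use the map $(x,t)\mapsto x + t\nu(x)$ from $\Sigma \times (-\delta,\delta)$; for $\delta$ small enough, this is a diffeomorphism onto its image, and $\pi$ sends $x+t\nu(x)$ back to $x$.

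Next, I would fix a smooth cutoff function $\chi\in C_c^\infty(\mathbb{R}^d)$ with $\supp\chi\subset U$ and $\chi\equiv 1$ in some smaller neighborhood $V$ of $\Sigma$. Given $g\in C^\infty(\Sigma)$, define
\begin{equation*}
E[g](x) = \begin{cases} \chi(x)\,g(\pi(x)), & x\in U,\\ 0, & x\notin U.\end{cases}
\end{equation*}
Since $g\circ \pi$ is smooth on $U$ and $\chi$ is supported inside $U$, the function $E[g]$ is smooth on $\mathbb{R}^d$; since it is compactly supported, it lies in $\mathcal{S}(\mathbb{R}^d)$. For $x\in\Sigma\subset V$ we have $\chi(x)=1$ and $\pi(x)=x$, whence $E[g](x)=g(x)$, so $\Pi_\Sigma\circ E=\mathrm{id}$. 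Linearity of $E$ is evident from the formula.

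There is no real obstacle: everything reduces to the existence of the tubular neighborhood (which uses compactness and smoothness of $\Sigma$) and the existence of a bump function. The only mild subtlety is that if $\Sigma$ is non-orientable, one should describe the tubular neighborhood as the image of the normal bundle under the exponential map rather than by a single normal field $\nu$, but the retraction $\pi$ and hence the formula for $E[g]$ go through unchanged.
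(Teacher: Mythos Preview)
Your proof is correct and follows essentially the same idea as the paper's: extend a function on~$\Sigma$ by making it constant in the normal direction and then multiply by a smooth cutoff. The paper carries this out chart-by-chart via a partition of unity and local graph parametrizations (extending by a one-dimensional bump~$\psi(x_d-f(x))$), whereas you package the same geometry into a single appeal to the tubular neighborhood theorem and its retraction~$\pi$; the two constructions are equivalent, yours being slightly more streamlined.
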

\begin{proof}
It suffices to extend any function~$\varphi \in C^{\infty}(\Sigma)$ to the whole space, and do this linearly. We denote the operator we are going to construct by~$\Ext$,~$\Ext\colon C^{\infty}(\Sigma)\to\mathcal{S}(\mathbb{R}^d)$. Its main property is the identity
\begin{equation*}
\Pi_{\Sigma}[\Ext[\varphi]] = \varphi,\quad \varphi \in C^{\infty}(\Sigma).
\end{equation*}
Let~$\{\zeta_n\}_n$ be a finite partition of unity on~$\Sigma$ that is subordinated to an atlas. It suffices to define the action of~$\Ext$ on the functions of the type~$\varphi\zeta_n$ since~$\varphi = \sum_n\varphi \zeta_n$. From now on, fix~$n$, let~$U_n$ be the corresponding chart neighborhood; we may assume that~$\varphi$ is supported in~$U_n$. Without loss of generality, we may also assume that~$\Sigma \cap U_n = \{(x,f(x))\,|\; x\in V\}$, where~$V \subset \mathbb{R}^{d-1} = \{x\in\mathbb{R}^d\mid x_d=0\}$ is a neighborhood of zero and~$f\colon V \to \mathbb{R}^d$ is a smooth function. Let~$\psi$ be a~$C_0^{\infty}(\mathbb{R})$ function such that~$\psi(0)=1$ and its support is sufficiently small. Define the extension of~$\varphi$ by the formula
\begin{equation*}
\Ext[\varphi](x,x_d) = \varphi(x,f(x))\psi(x_d - f(x)),\quad x=(x_1,x_2,\ldots,x_{d-1}) \in V.
\end{equation*}
First, this definition is smooth and linear in~$\varphi$, second,~$\Ext[\varphi]$ is supported in~$U_n$ provided the support of~$\psi$ is sufficiently small (and thus~$\Ext[\varphi]|_{\Sigma} = \varphi$ indeed). 
\end{proof}
We will use the adjont operator~$\Pi_{\Sigma}^*\colon (C^{\infty}(\Sigma))^\prime \to \mathcal{S}^\prime$ more often. 
\begin{Th}\label{Annihilator}
Let~$p \in [1,\infty)$. The annihilator of~$\Si L_p$ in~$L_q$,~$\frac{1}{p} + \frac{1}{q} = 1$, can be described as 
\begin{equation*}
\Ann_{L_q(\mathbb{R}^d)} \big(\,\Si L_p\big) = \Big\{g \in L_q(\mathbb{R}^d)\,\Big|\; \exists \zeta \in (C^{\infty}(\Sigma))^\prime\quad \hbox{such that}\quad \hat{g} = \Pi_{\Sigma}^*[\zeta]\Big\}.
\end{equation*}
\end{Th}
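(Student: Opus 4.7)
The plan is to unwind the definition of the annihilator by duality and then apply Lemma~\ref{Extension}. A function~$g\in L_q$ belongs to~$\Ann_{L_q}(\Si L_p)$ if and only if~$\int fg\,dx = 0$ for every~$f\in\mathcal{S}(\mathbb{R}^d)$ with~$\hat f|_\Sigma = 0$. Passing to the Fourier side via the distributional identity~$\langle \hat T,\varphi\rangle = \langle T,\hat\varphi\rangle$ and substituting~$h=\hat f$ (which ranges over all of~$\mathcal{S}$ as~$f$ does, since~$\mathcal{F}$ is a bijection on~$\mathcal{S}$), this condition becomes the statement that the tempered distribution~$\hat g$ annihilates the subspace~$\Ker\Pi_\Sigma = \{h\in\mathcal{S}\colon h|_\Sigma = 0\}$. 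So the theorem reduces to describing those~$T\in\mathcal{S}'$ that vanish on~$\Ker\Pi_\Sigma$.

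This is the point where Lemma~\ref{Extension} enters. Given such a~$T$, define~$\zeta\in(C^\infty(\Sigma))^\prime$ by~$\langle\zeta,\varphi\rangle := \langle T,\Ext[\varphi]\rangle$. For any~$\psi\in\mathcal{S}(\mathbb{R}^d)$, the difference~$\psi - \Ext[\Pi_\Sigma\psi]$ restricts to zero on~$\Sigma$ and hence lies in~$\Ker\Pi_\Sigma$; consequently
\begin{equation*}
\langle T,\psi\rangle = \langle T,\Ext[\Pi_\Sigma\psi]\rangle = \langle\zeta,\Pi_\Sigma\psi\rangle = \langle\Pi_\Sigma^*[\zeta],\psi\rangle,
\end{equation*}
i.e.~$T=\Pi_\Sigma^*[\zeta]$. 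The reverse inclusion is immediate: if~$\hat g = \Pi_\Sigma^*[\zeta]$, then for any~$h\in\mathcal{S}$ with~$h|_\Sigma = 0$ one has~$\langle\hat g,h\rangle = \langle\zeta,\Pi_\Sigma h\rangle = 0$, which via the same Fourier pairing yields~$\int fg\,dx = 0$ for every~$f\in\mathcal{S}$ with~$\hat f|_\Sigma = 0$, and this extends to all of~$\Si L_p$ by the continuity of the pairing in~$f\in L_p$.

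The whole argument is essentially a soft functional-analytic exercise once Lemma~\ref{Extension} is in hand; the only genuinely non-trivial input is the existence of a continuous right inverse~$\Ext$ of~$\Pi_\Sigma$, which provides the splitting~$\mathcal{S} = \Ext(C^\infty(\Sigma)) \oplus \Ker\Pi_\Sigma$ of Fr\'echet spaces that one needs to dualize. The mild technical points to verify are the continuity of~$\Pi_\Sigma$ and~$\Ext$ in the natural Fr\'echet topologies (so that the adjoints~$\Pi_\Sigma^*$ and~$\Ext^*$ land in the right distribution spaces) and the bookkeeping of Fourier sign conventions in the first step; neither is conceptually serious, so I do not anticipate any real obstacle.
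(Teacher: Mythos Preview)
Your proposal is correct and follows essentially the same route as the paper: reduce to showing that any tempered distribution~$T=\hat g$ vanishing on~$\Ker\Pi_\Sigma$ has the form~$\Pi_\Sigma^*[\zeta]$, then define~$\zeta = T\circ\Ext$ via Lemma~\ref{Extension} and verify the identity using~$\psi-\Ext[\Pi_\Sigma\psi]\in\Ker\Pi_\Sigma$. The only differences are cosmetic --- you spell out the reverse inclusion and the Fourier-pairing step more explicitly than the paper does, and you flag the Fr\'echet continuity of~$\Ext$ (which the paper leaves implicit).
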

It is easy to see that the Fourier transform of a function~$g$ in the annihilator of~$\Si L_p$ is supported on~$\Sigma$. Theorem~\ref{Annihilator} says that the distribution~$\hat{g}$ is not only supported on~$\Sigma$, but can be treated as a distribution on~$\Sigma$, e.g. does not contain differentiations with respect to directions transversal to~$\Sigma$.
\begin{proof}
It suffices to show that every~$g \in L_q(\mathbb{R}^d)$ that annihilates~$\Si L_p$ can be represented as~$\mathcal{F}^{-1}\Pi^*_{\Sigma}[\zeta]$,~$\zeta \in (C^{\infty})^{\prime}(\Sigma)$. In other words, we need to construct~$\zeta$ satisfying the identity
\begin{equation}\label{ZetaIdentity}
\scalprod{\hat{g}}{f} = \scalprod{\zeta}{\Pi_{\Sigma}[f]},\quad \hbox{for all} \ f\in \mathcal{S}(\mathbb{R}^d).
\end{equation}
Consider the following diagram:
\catcode`\,=12
\begin{equation*}
\begin{diagram}
\node{\mathcal{S}(\mathbb{R}^d)}
\arrow[2]{e,t}{\Pi_{\Sigma}}
\arrow{se,b}{\hat{g}}
\node[2]{C^{\infty}(\Sigma)}
\arrow{sw,b}{?}\\
\node[2]{\mathbb{C}}
\end{diagram}\quad .
\end{equation*}
\catcode`\,=13
Then,~$\zeta$ corresponds to the arrow marked with a question. We will  construct~$\zeta$ as a map that makes the diagram commutative. First, since~$g$ annihilates~$\Si L_p$, we have the inclusion~$\ker \Pi_{\Sigma} \subset \ker \hat{g}$. Construct~$\zeta$ by the formula
\begin{equation*}
\zeta = \hat{g}\circ\Ext,
\end{equation*}
where~$\Ext$ is the right inverse of~$\Pi_{\Sigma}$ provided by Lemma~\ref{Extension}. Let us prove~\eqref{ZetaIdentity}, which is rewritten as
\begin{equation*}
\scalprod{\hat{g}}{f} = \scalprod{\hat{g}}{\Ext\big[\Pi_{\Sigma}[f]\big]}.
\end{equation*}
This is the same as to show that~$f-\Ext\big[\Pi_{\Sigma}[f]\big] \in \ker \hat{g}$. Since~$\ker \Pi_{\Sigma} \subset \ker \hat{g}$, it suffices to show that
\begin{equation*}
\Pi_{\Sigma}\Big[f-\Ext\big[\Pi_{\Sigma}[f]\big]\Big] = 0,
\end{equation*}
which is trivial since~$\Pi_{\Sigma}\circ \Ext$ is the identity operator.
\end{proof}
\section{Density of smooth functions with restriction operator}\label{Rot}
% We prove the theorem below the critical index for the case of the sphere. 
This section contains the proof of Theorem~\ref{DensityWithRO}. We start with a similar statement for the annihilator.
\begin{Le}\label{ApprAnLe}
Let~$p \in [1,\infty)$. The set
\begin{equation}\label{ApproximationAnn}
\Big\{g \in L_q(\mathbb{R}^d)\,\Big|\; \exists \zeta \in C^{\infty}(\Sph^{d-1})\quad \hbox{such that}\quad \hat{g} = \Pi_{\Sph^{d-1}}^*[\zeta]\Big\}
\end{equation}
is dense in~$\Ann_{L_q}\big(\,_{_{\Sph^{\scriptscriptstyle d-1}}}\! L_p\big)$.
\end{Le}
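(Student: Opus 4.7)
The plan is to exploit the rotational symmetry of~$\Sph^{d-1}$: averaging~$g$ against a smooth density on~$SO(d)$ leaves it close in~$L_q$ while smoothing the a~priori distributional datum~$\zeta$ on the sphere.

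By Theorem~\ref{Annihilator}, any~$g$ in~$\Ann_{L_q}({}_{\Sph^{d-1}}\! L_p)$ satisfies~$\hat{g}=\Pi^*_{\Sph^{d-1}}[\zeta]$ for some~$\zeta\in(C^{\infty}(\Sph^{d-1}))^\prime$. Let~$\{\phi_\eps\}_{\eps>0}$ be a smooth nonnegative approximate identity on~$SO(d)$ with respect to the normalized Haar measure, supported in shrinking neighborhoods of the identity and integrating to~$1$. Define
\begin{equation*}
g_\eps(x)=\int_{SO(d)}\phi_\eps(R)\,g(Rx)\,dR.
\end{equation*}
Minkowski's inequality and the rotation invariance of Lebesgue measure yield~$\|g_\eps\|_{L_q}\leq\|g\|_{L_q}$, and for~$q<\infty$ strong continuity of the rotation action on~$L_q(\mathbb{R}^d)$ gives~$g_\eps\to g$ in~$L_q$ as~$\eps\to 0$.

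A Fubini computation starting from the identities~$\widehat{g(R\cdot)}(\xi)=\hat{g}(R\xi)$ and~$\Pi_{\Sph^{d-1}}[f\circ R^{-1}]=\Pi_{\Sph^{d-1}}[f]\circ R^{-1}$ (valid because~$R$ preserves~$\Sph^{d-1}$) will show that~$\hat{g}_\eps=\Pi^*_{\Sph^{d-1}}[\zeta_\eps]$, where
\begin{equation*}
\scalprod{\zeta_\eps}{\varphi}=\int_{SO(d)}\phi_\eps(R)\scalprod{\zeta}{\varphi\circ R^{-1}}\,dR,\qquad \varphi\in C^{\infty}(\Sph^{d-1}).
\end{equation*}
The remaining point is that~$\zeta_\eps\in C^\infty(\Sph^{d-1})$. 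This is the standard fact that the convolution of a distribution on a homogeneous space with a smooth density on the acting compact Lie group is smooth; it is proved by differentiating under the integral and transferring all derivatives onto~$\phi_\eps$.

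The main obstacle is the endpoint~$q=\infty$ (that is,~$p=1$), where the rotation action on~$L_q$ is not strongly continuous and the approximation~$g_\eps\to g$ need not hold. Since Theorem~\ref{DensityWithRO} assumes~$p\in(1,\infty)$, this defect is harmless in the intended application. A second point specific to the geometry is the transitive action of~$SO(d)$ on~$\Sph^{d-1}$; no analogous averaging scheme is available for a general smooth hypersurface~$\Sigma$, which is precisely why this density lemma is stated only for the sphere.
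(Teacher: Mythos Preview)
Your approach is essentially the same as the paper's: average~$g$ against a smooth approximate identity on the rotation group, check that this corresponds on the Fourier side to smoothing~$\zeta$ (so~$\zeta_\eps\in C^\infty(\Sph^{d-1})$), and verify~$g_\eps\to g$ in~$L_q$. The paper works with~$\OO_d$ rather than~$SO(d)$ and defines the approximation starting from~$\zeta_n$ rather than from~$g$, but then immediately observes (via commutation of rotations with the Fourier transform) that~$g_n(x)=\int g(Tx)\varphi_n(T)\,d\mu(T)$, which is exactly your~$g_\eps$.

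The only substantive difference is how~$g_\eps\to g$ is justified: you invoke Minkowski's inequality and strong continuity of the rotation action on~$L_q$ for~$q<\infty$, while the paper passes to polar coordinates, uses the approximate-identity convergence on each sphere~$r\Sph^{d-1}$, and finishes with dominated convergence in~$r$. Your route is shorter; both are standard and both fail at~$q=\infty$. Your observation that the endpoint~$p=1$ is not covered is correct and applies equally to the paper's own argument (the formula~$\|g-g_n\|_{L_q}^q$ written there is meaningless when~$q=\infty$); as you note, this is irrelevant for Theorem~\ref{DensityWithRO}.
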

\begin{proof}
Consider the group~$\OO_d$ of rotations of~$\Sph^{d-1}$. Let~$\mu$ be the Haar measure on~$\OO_d$. Let~$\{\varphi_n\}_n$ be a smooth approximation of identity in~$\OO_d$. By this we mean that~$\{\varphi_n\}_n$ is a collection of~$C^{\infty}$-smooth functions on~$\OO_d$, each function~$\varphi_n$ is supported in a~$\frac{1}{n}$-neighborhood of the identity element, and~$\int_{\OO_d} \varphi_n\,d\mu = 1$ for all~$n$. It is easy to construct such a system of functions on the Euclidean space of appropriate dimension; then one can transfer the constructed functions to a neighborhood of the identity element in~$\OO_d$ using a chart mapping (and multiplying each function by a suitable constant to fulfill the condition~$\int_{\OO_d} \varphi_n\,d\mu = 1$).

Our aim is to approximate a function~$g \in \Ann_{L_q}\big(\,_{_{\Sph^{\scriptscriptstyle d-1}}}\! L_p\big)$ by functions belonging to the set~\eqref{ApproximationAnn}. By Theorem~\ref{Annihilator}, there exists a distribution~$\zeta \in \big(C^{\infty}(\Sph^{d-1})\big)^{\prime}$ such that~$\hat{g} = \Pi_{\Sph^{d-1}}^*[\zeta]$. The idea is to define the functions~$g_n$ and~$\zeta_n$ by the formulas
\begin{equation}\label{ZetaN}
\zeta_n = \int\limits_{\OO_d}\zeta_{T}\varphi_n(T)\,d\mu(T),\quad g_n = \mathcal{F}^{-1}\big[\Pi_{\Sph^{d-1}}^*[\zeta_n]\big].
\end{equation}
where~$\zeta_{T}$ is the rotation of~$\zeta$ by~$T$. Note that the integral~\eqref{ZetaN} should be understood in a special way (the integrand is distributional-valued). We define the averaging over~$\OO_d$ with the weight~$\varphi_n$ in the same way one defines the convolution of distributions with some fixed test function: the operator of averaging acts on~$C^{\infty}(\Sph^{d-1})$ and is formally self-adjoint, thus, one can define the averaging~\eqref{ZetaN} as a conjugate operator to the averaging of test functions.  

We note that the~$\zeta_n$ are smooth functions (it is instructive to consider the case~$d=2$, where the averaging~\eqref{ZetaN} turns into the classical convolution on the circumference; the proof for the general case is verbatim the reasoning for this classical case). Therefore, it remains to prove that~$g_n \to g$ in~$L_q(\mathbb{R}^d)$.

Since rotations commute with the Fourier transform, 
\begin{equation*}
g_n(x) = \int\limits_{\OO_d}g(Tx)\varphi_n(T)\,d\mu(T) \quad \hbox{for all } x \in \mathbb{R}^d.
\end{equation*}
We see that~$g_n$ is an~$L_q$-function. Using polar coordinates $\mathbb{R}^d \ni x = (r,\sigma) \in (0,\infty)\times \Sph^{d-1}$, we can write
\begin{equation}\label{qDifference}
\|g-g_n\|_{L_q}^q = \int\limits_{0}^{\infty}\bigg(\int\limits_{\Sph^{d-1}}|g(r,\sigma) - g_n(r,\sigma)|^q \,d\sigma\bigg)r^{d-1}\,dr.
\end{equation}
(There is an ambiguity in this formula: we denote by~$g$ and~$g_n$ both the initial functions and the ones they turn into after the change of variables; we also disregard the constant factor.) We see that~$g_n(r,\cdot)$ is the averaging of~$g(r,\cdot)$ with respect to~$\varphi_n\,d\mu$ for every fixed~$r$. By the properties of the approximation of identity,~$g_n(r,\cdot)\to g(r,\cdot)$ in~$L_q$ for every~$r$. Thus, we have a pointwise convergence of the integrands in the outer integral in~\eqref{qDifference}. Moreover, 
\begin{equation*}
\int\limits_{\Sph^{d-1}}|g(r,\sigma) - g_n(r,\sigma)|^q \,d\sigma \leq 2\int\limits_{\Sph^{d-1}}|g(r,\sigma)|^q,
\end{equation*} 
so, the integrands also posses a summable majorant. Therefore, by the Lebesgue convergence theorem,~$\|g_n - g\|_{L_q} \to 0$.
\end{proof}

\paragraph{Proof of Theorem~\ref{TildaSpace}.} Assume that~$_{_{\Sph^{\scriptscriptstyle d-1}}}\!\tilde{L}_p \nsubseteq  _{_{\Sph^{\scriptscriptstyle d-1}}}\!\!\! L_p$. By the Hahn--Banach theorem, there exist functions~$f$ and~$g$ such that
\begin{equation*}
f \in _{_{\Sph^{\scriptscriptstyle d-1}}}\!\!\!\tilde{L}_p, \quad g  \in \Ann_{L_q}\big(\,_{_{\Sph^{\scriptscriptstyle d-1}}}\! L_p\big), \quad \scalprod{f}{g} \ne 0.
\end{equation*}
Lemma~\ref{ApprAnLe} says that we may make~$g$ belong to the set~\eqref{ApproximationAnn}. Let~$\{f_n\}_n$ be a sequence of Schwartz functions approximating~$f$ in~$L_p(\mathbb{R}^d)$. On the one hand,
\begin{equation*}
|\scalprod{f_n}{g}| = |\scalprod{\hat{f}_n}{\Pi_{\Sph^{d-1}}^*[\zeta]}| = |\scalprod{\Pi_{\Sph^{d-1}}[\hat{f}_n]}{\zeta}| \leq \|\R_{\Sph^{d-1}}[f_n]\|_{L_1(\Sph^{d-1})}\|\zeta\|_{L_{\infty}} \lesssim \|f-f_n\|_{L_p}\|\zeta\|_{L_{\infty}} \to 0
\end{equation*}
since we have assumed that~$\Sph^{d-1}$ admits the~$L_p$ restriction and~$\R_{\Sph^{d-1}}[f] = 0$. On the other hand, ~$\scalprod{f_n}{g} \to \scalprod{f}{g} \ne 0$. A contradiction.\qed
\begin{Rem}
It was noticed by Michael Goldberg that one can actually omit the usage of Theorem~\textup{\ref{Annihilator}} in the reasoning above, i.e. can avoid passing to duals and directly construct the approximation of a function in~$_{_{\Sph^{\scriptscriptstyle d-1}}}\!\tilde{L}_p$ using the same averaging and properties of the extention operator built in Lemma~\textup{\ref{Extension}}.
\end{Rem}

\section{Density of smooth functions without restriction operator}\label{dim}
% Here we prove the density of smooth functions in supercritical regime. We begin with proving that the distribution is square summable. Then, we use the Frostman-type lemma to deal with the remaining case.
This section contains the proof of Theorem~\ref{DensityWithoutMO}.
\begin{Def}\label{Dimension}
Let~$\mu$ be a complex measure of locally bounded variation on~$\mathbb{R}^d$. The number
\begin{equation*}
\dim \mu \df \inf \Big\{\alpha\,\Big|\;\exists\, F \ \hbox{--- Borel set,}\ \mu(F) \ne 0, \dim F \leq \alpha\Big\}
\end{equation*}
is called the lower Hausdorff dimension of~$\mu$.
\end{Def}
We will use only the Hausdorff dimension for sets and measures, so we do not emphasize this. We will also call the lower dimension of measure simply the dimension.  The proof of Theorem~\ref{DensityWithoutMO} relies on Lemma~$2$ in~\cite{SW}. We will need a simplified version stated below. In this lemma,~$\varphi$ is a hat function, i.e. a~$C_0^{\infty}(\mathbb{R}^d)$-function that is non-negative, radial, non-increasing (as a function of radius), and equals one in a neighborhood of zero.
\begin{Le}[Simplified Lemma~$2$ in~\cite{SW}]\label{Frostman}
Let~$\mu$ be a complex measure of finite variation on~$\mathbb{R}^d$. If for all~$\xi\in\mathbb{R}^d$ and~$r \in (0,1)$
\begin{equation}\label{FrostmanCondition}
\bigg|\int_{\mathbb{R}^d}\varphi(r^{-1}\xi+\eta)\,d\mu(\xi)\bigg| \lesssim r^{\alpha},
\end{equation}
then~$\dim \mu \geq \alpha$.
\end{Le}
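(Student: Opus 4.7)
The plan is a standard Frostman-style covering argument. Suppose for contradiction that $\dim\mu<\alpha$, so there is a Borel set $F$ with $\mu(F)\ne 0$ and $\dim F<\beta<\alpha$ for some $\beta$. Writing $\mu=(\mu_1-\mu_2)+i(\mu_3-\mu_4)$ with positive finite parts and using their inner regularity, I replace $F$ by a compact subset on which $\mu$ still does not vanish. For every $\varepsilon>0$, the definition of Hausdorff dimension gives a finite cover of $F$ by balls $B(x_j,r_j)$ with $r_j<1$ and $\sum_j r_j^{\beta}<\varepsilon$; since $\alpha>\beta$ and $r_j<1$, also $\sum_j r_j^{\alpha}<\varepsilon$.

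Next, set $g(y)=\sum_j\varphi(r_j^{-1}(y-x_j))$. Applying the hypothesis to each summand (with $\eta=-x_j/r_j$) and using the triangle inequality gives $\bigl|\int g\,d\mu\bigr|\lesssim\sum_j r_j^{\alpha}<C\varepsilon$. Shrinking each radius by the fixed constant $r_0$ on which $\varphi\equiv 1$ (this only multiplies the bound above by $r_0^{-\alpha}$), I may moreover arrange $g\ge\chi_F$ pointwise. For a positive measure $\mu$ this finishes the argument: $\mu(F)\le\int g\,d\mu\le C\varepsilon$, so $\mu(F)=0$ upon letting $\varepsilon\to 0$, contradicting the choice of $F$.

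The obstacle is complex $\mu$, where $\bigl|\int g\,d\mu\bigr|<C\varepsilon$ does not directly bound $|\mu(F)|$, since cancellation in $\int(g-\chi_F)\,d\mu$ is possible. My plan is to reduce to the positive case by first observing $\dim\mu=\dim|\mu|$: the polar decomposition $d\mu=\theta\,d|\mu|$ with $|\theta|=1$ partitions any $E$ with $|\mu|(E)>0$ into Borel pieces on which $\mathrm{Re}\,\theta$ or $\mathrm{Im}\,\theta$ stays in a fixed halfplane, producing $E'\subset E$ of the same Hausdorff dimension with $\mu(E')\ne 0$. It then suffices to show $|\mu|(B(x,r))\lesssim r^{\alpha}$ for $r<1$ and rerun the positive-measure argument for $|\mu|$. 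Deriving this ball estimate for $|\mu|$ from the hypothesis on $\mu$ is the hardest step, and I expect to handle it by exploiting the freedom in the center $\eta$ in~\eqref{FrostmanCondition} to localize finely, chaining tests of $\mu$ against translated copies of $\varphi$ at nearby centers and scales so as to pin down the phase $\theta$ on scale $r$ and thereby bootstrap a $|\mu|$-estimate from the $\mu$-estimate, in the spirit of the original Lemma~2 of~\cite{SW}.
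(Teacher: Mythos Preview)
The paper does not give its own proof of this lemma; it is quoted from~\cite{SW}. What the paper \emph{does} contain, however, is a remark that is fatal to your plan. Immediately after stating the lemma the author writes: ``We note without proof that~\eqref{FrostmanCondition} does not imply~$|\mu|(B_r(\xi)) \lesssim r^{\alpha}$.'' Your proposed route for the complex case is precisely to establish this implication and then rerun the covering argument for the positive measure~$|\mu|$. Since the implication is false, no bootstrap ``exploiting the freedom in the center~$\eta$'' can succeed: the smoothed averages~$\int\varphi(r^{-1}\xi+\eta)\,d\mu(\xi)$ may well be uniformly~$O(r^{\alpha})$ by virtue of cancellation while~$|\mu|$ still charges small balls much more heavily than~$r^{\alpha}$.

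Your treatment of the non-negative case is correct, and the observation~$\dim\mu=\dim|\mu|$ is valid. But the whole point of the lemma, and the reason it is singled out rather than quoting the classical Frostman lemma, is that the \emph{signed} condition~\eqref{FrostmanCondition} already forces~$\dim\mu\geq\alpha$ without passing through~$|\mu|$. An argument must therefore work directly with~$\mu$; your reduction to~$|\mu|$ is exactly the step that cannot be carried out. You should consult the original argument in~\cite{SW} for a strategy that respects this obstruction.
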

The classical Frostman lemma\footnote{The easier part of the Frostman lemma.} says that if~$\mu$ is non-negative and satisfies the inequality~$\mu(B_r(x)) \lesssim r^{\alpha}$ for all balls~$B_r(\xi)$ uniformly, then~$\dim \mu \geq \alpha$. Lemma~\ref{Frostman} deals with a more general case of a signed (or what is the same, complex) measure. We note without proof that~\eqref{FrostmanCondition} does not imply~$|\mu|(B_r(\xi)) \lesssim r^{\alpha}$. 

\paragraph{Proof of Theorem~\ref{DensityWithoutMO}.} By Theorem~\ref{Annihilator}, it suffices to prove that there does not exist a non-zero distribution~$\zeta \in (C^{\infty}(\Sigma))^{\prime}$ such that~$\mathcal{F}^{-1}\big[\Pi_{\Sigma}^*[\zeta]\big] \in L_q$,~$q < \frac{2d}{d-1}$. Assume the contrary, let~$\zeta$ be such a distribution. We will come to a contradiction in two steps: first we will show that~$\zeta \in L_2(\Sigma)$, second, we will apply Lemma~\ref{Frostman} to the measure~$\zeta\,d\sigma$ (this is a measure since~$\zeta \in L_2(\Sigma)$). But before that we note that the problem is local. Indeed, if we multiply~$\zeta$ by a smooth function supported in a neighborhood of a point on~$\Sigma$, we do not harm the condition~$\mathcal{F}^{-1}\big[\Pi_{\Sigma}^*[\zeta]\big] \in L_q$. Thus, we may assume that~$\zeta$ is supported in arbitrarily  small neighborhood~$U_s$ of some point~$s \in \Sigma$.

We rewrite the condition~$\mathcal{F}^{-1}\big[\Pi_{\Sigma}^*[\zeta]\big] \in L_q$ in a more convenient form. In the formula below, we identify~$\mathbb{R}^{d-1}$ with some fixed linear hyperplane in~$\mathbb{R}^d$:
\begin{equation*}
\int\limits_{\mathbb{R}^d}\Big|\mathcal{F}^{-1}\big[\Pi_{\Sigma}^*[\zeta]\big](x)\Big|^q\,dx = \int\limits_{\OO_d}\int\limits_{\mathbb{R}^{d-1}} \Big|\mathcal{F}^{-1}\big[\Pi_{\Sigma}^*[\zeta]\big](T\tilde{x})\Big|^q|\tilde{x}|\,d\tilde{x} \,d \mu(T),
\end{equation*}
here~$\mu$ stands for the Haar measure on~$\OO_d$ (we disregard a constant factor in this formula). Since the value on the left is finite, 
\begin{equation}\label{Slice}
\int\limits_{\mathbb{R}^{d-1}} \Big|\mathcal{F}^{-1}\big[\Pi_{\Sigma}^*[\zeta]\big](T\tilde{x})\Big|^q|\tilde{x}|\,d\tilde{x} < \infty
\end{equation}
for almost all~$T \in \OO_d$.  Therefore, there exists an element~$T \in \OO_d$ such that the hyperplane~$T\mathbb{R}^{d-1}$ is not collinear with the normals to~$\Sigma$ at the points in~$U_s$ and the corresponding integral~\eqref{Slice} is finite (we recall that~$U_s$ is small). We apply H\"older's inequality:
\begin{equation}\label{SquareSliceBound}
\int\limits_{|\tilde{x}| \geq 1} \Big|\mathcal{F}^{-1}\big[\Pi_{\Sigma}^*[\zeta]\big](T\tilde{x})\Big|^2\,d\tilde{x} \leq
\Bigg(\int\limits_{|\tilde{x}| \geq 1} \Big|\mathcal{F}^{-1}\big[\Pi_{\Sigma}^*[\zeta]\big](T\tilde{x})\Big|^q|\tilde{x}|\,d\tilde{x}\Bigg)^{\frac{2}{q}}\Bigg(\int\limits_{|\tilde{x}| \geq 1} |\tilde{x}|^{-\frac{2}{q-2}}d\tilde{x}\Bigg)^{\frac{q-2}{q}}.
\end{equation}
We note that~$\frac{2}{q-2} > d-1$ when~$q < \frac{2d}{d-1}$, so, the function~$\mathcal{F}^{-1}\big[\Pi_{\Sigma}^*[\zeta]\big](T\tilde{x})$ belongs to~$L_2(\mathbb{R}^{d-1})$. 

Since~$T\mathbb{R}^{d-1}$ is not collinear with the normals to~$\Sigma\cap U_s$,  one can represent the surface~$\Sigma\cap U_s$ as a graph of some function~$h\colon T\mathbb{R}^{d-1} \to \mathbb{R}$ in the orthonormal coordinates of the hyperplane~$T\mathbb{R}^{d-1}$ (here~$V_s \subset T\mathbb{R}^{d-1}$ is a neighborhood of zero):
\begin{equation*}
\Sigma\cap U_s = \{(t,h(t))\mid t \in V_s\}.
\end{equation*}  
The mapping~$t \to (t,h(t))$ is a diffeomorphism onto its image, therefore, there exists a distribution~$\zeta_T \in (C_0^{\infty}(V_s))^{\prime}$, whose image under this diffeomorphism is~$\zeta$. In the chosen coordinates, we have:
\begin{equation*}
\mathcal{F}^{-1}\big[\Pi_{\Sigma}^*[\zeta]\big] (t,\underline{t}) = \scalprod{\zeta_T}{e^{-2\pi i \big(\scalprod{t}{\cdot} + \underline{t}h(\cdot)\big)}},\quad t\in T\mathbb{R}^{d-1}, \,\,\underline{t} \in \mathbb{R}.
\end{equation*}
In particular,~$\mathcal{F}^{-1}\big[\Pi_{\Sigma}^*[\zeta]\big] (t,0)$ is the~$(d-1)$-dimensional inverse Fourier transform of~$\zeta_T$.  By~\eqref{SquareSliceBound}, the function~$t\mapsto\mathcal{F}^{-1}[\Pi_{\Sigma}^*[\zeta]](t,0)$ is square summable. Thus, by the Plancherel theorem,~$\zeta_T \in L_2$. Applying the inverse diffeomorphism, we see that~$\zeta$ \emph{is a square summable function with respect to the Lebesgue measure on~$\Sigma$}.

We are going to apply Lemma~\ref{Frostman} to the measure~$\nu =\zeta\,d\sigma$ to show that its dimension is strictly greater than~$d-1$ (obviously, this will give us the desired contradiction). It is convenient to use the following potential:
\begin{equation*}
I(\nu,\alpha)\df\int\limits_{\mathbb{R}^d} \big|\mathcal{F}^{-1}[\nu](x)\big|^2\big(1+|x|\big)^{\alpha-d}\,dx,
\end{equation*}
where~$\alpha \in (0,d)$. The inequality
\begin{equation*}
\int\limits_{\mathbb{R}^d} \big|\mathcal{F}^{-1}[\nu](x)\big|^2\big(1+|x|\big)^{\alpha-d}\,dx \leq  \bigg(\,\,\int\limits_{\mathbb{R}^d} \big|\mathcal{F}^{-1}[\nu](x)\big|^q\,dx\bigg)^{\frac{2}{q}}\bigg(\,\,\int\limits_{\mathbb{R}^d}\big(1+|x|\big)^{\frac{q(\alpha-d)}{q-2}}\,dx\bigg)^{\frac{q-2}{q}}
\end{equation*}
shows that~$I(\nu,\alpha)$ is finite for some~$\alpha > d-1$ (since~$\frac{q}{q-2} > d$ if~$2 \leq q < \frac{2d}{d-1}$; the case~$q < 2$ is obvious). It remains to show that the assumptions of Lemma~\ref{Frostman} follow from the condition~$I(\nu,\alpha) < \infty$. We use the notation~$\varphi_r(\xi) = \varphi(r^{-1}\xi)$. First, 
\begin{equation*}%\label{TestFunctionEstimateFormula}
\big|\scalprod{\nu}{\varphi_r(\cdot + \eta)}\big| \leq \int\limits_{\mathbb{R}^d}\big|\mathcal{F}^{-1}[\nu](x)\mathcal{F}^{-1}[\varphi_{r}](x)\big|\,dx \leq I(\nu,\alpha)^{\frac12}\Big(\int\limits_{\mathbb{R}^d}\big|\mathcal{F}^{-1}[\varphi_r](x)\big|^2 (1+|x|^2)^{\frac{d-\alpha}{2}}\,dx\Big)^{\frac12}.
\end{equation*}
Second,
\begin{multline*}
\int\limits_{\mathbb{R}^d}\Big|\mathcal{F}^{-1}[\varphi_r](x)\Big|^2\Big(1 + |x|^2\Big)^{\frac{d-\alpha}{2}}\,dx = \int\limits_{\mathbb{R}^d}\Big|r^{d}\mathcal{F}^{-1}[\varphi](rx)\Big|^2\Big(1 + |x|^2\Big)^{\frac{d-\alpha}{2}}\,dx \leq \\
\int\limits_{\mathbb{R}^d}\Big|r^{d}\mathcal{F}^{-1}[\varphi](rx)\Big|^2\Big(\frac{1}{r^2} + \frac{|rx|^2}{r^2}\Big)^{\frac{d-\alpha}{2}}r^{-d}\,d(rx) =r^{\alpha}\int\limits_{\mathbb{R}^d}\Big|\mathcal{F}^{-1}[\varphi](z)\Big|^2\Big(1 + |z|^2\Big)^{\frac{d-\alpha}{2}}\,dz,
\end{multline*}
provided~$r \leq 1$.\qed

\section{Case~$d=2$,~$p=\frac43$}\label{Sd=2}
% Fefferman's construction
In this section, we prove Theorem~\ref{d=2}. 
\paragraph{Proof of Theorem~\ref{d=2}.} By Theorem~\ref{Annihilator}, it suffices to prove that there does not exists a non-zero distribution~$\zeta \in (C^{\infty})^{\prime}(\Sigma)$ such that~$\mathcal{F}^{-1}\big[\Pi_{\Sigma}^*[\zeta]\big] \in L_4(\mathbb{R}^2)$. We may assume that~$\zeta$ is supported in a small ball~$U$ centered at a point on~$\Sigma$ (let it be the origin). It is convenient to parameterize~$\Sigma$ by the line:
\begin{equation*}
\Sigma\cap U = \big\{(\xi,h(\xi))\in \mathbb{R}^2\,\big|\,\, \xi \in (-\eps,\eps)\big\}.
\end{equation*}  
Since rotations commute with the Fourier transform, we may assume that~$h'(0)=0$. By our assumptions,~$h$ is convex. We also assume that~$h''$ is almost constant on~$(-\eps,\eps)$.

The condition~$\mathcal{F}^{-1}\big[\Pi_{\Sigma}^*[\zeta]\big] \in L_4(\mathbb{R}^2)$ can be rewritten as
\begin{equation}\label{ConvolutionCondition}
\Pi_{\Sigma}^*[\zeta]*\Pi_{\Sigma}^*[\zeta] \in L_2(\mathbb{R}^2).
\end{equation}
Let~$\Phi \in C_0^{\infty}\big(U\big)$, then the function~$\tilde{\Phi} \in C_0^{\infty}(\mathbb{R}^2)$ is defined as
\begin{equation}\label{PhiandTilde}
\tilde{\Phi}(s,t) = \Phi(s+t,h(s) + h(t)). 
\end{equation}
In the light of this definition, the action of the convolution on a test function is written as
\begin{equation}\label{ConvolutionFormula}
\scalprod{\Pi_{\Sigma}^*[\zeta]*\Pi_{\Sigma}^*[\zeta]}{\Phi} = \scalprod{\zeta}{\scalprod{\zeta}{\tilde{\Phi}}}.
\end{equation} 
Let us study the change of variables~$(\xi,\eta) = \big(s+t,h(s) + h(t)\big)$ in detail. The mapping~$(s,t) \to (\xi,\eta)$ is injective on the set~$s < t$ and~$C^{\infty}$-smooth there. Its image is the set~$(\Sigma\cap U) + (\Sigma\cap U)$. So, it allows us to make the change of variables:
\begin{equation}\label{ChangeOfVariables}
\iint\limits_{(-\eps,\eps)^2} \tilde{\Phi}(s,t)|h'(s) - h'(t)|\,ds\,dt = 2\!\!\!\!\!\!\!\!\!\!\iint\limits_{(\Sigma\cap U) + (\Sigma\cap U)} \!\!\!\!\! \Phi(\xi,\eta)\,d\xi\,d\eta.
\end{equation}

%Let us prove the following: if~$\Pi_{\Sigma}^*[\zeta]*\Pi_{\Sigma}^*[\zeta] \in L_2(\mathbb{R}^2)$, then~$\zeta \in L_2(\Sigma)$.

Let~$\varphi \in C_0^{\infty}\big((-\eps,\eps)\big)$ be a function. Then, by formula~\eqref{ConvolutionFormula}, 
\begin{equation*}
|\scalprod{\zeta}{\varphi}|^2 = \Big|\scalprod{\zeta}{\scalprod{\zeta}{\tilde{\Phi}}}\Big| = \Big|\scalprod{\Pi_{\Sigma}^*[\zeta]*\Pi_{\Sigma}^*[\zeta]}{\Phi}\Big| \leq \Big\|\Pi_{\Sigma}^*[\zeta]*\Pi_{\Sigma}^*[\zeta]\Big\|_{L_2}\big\|\Phi\big\|_{L_2},
\end{equation*}
where~$\tilde{\Phi}(s,t) = \varphi(s)\varphi(t)$ (and the function~$\Phi$ is recovered by formula~\eqref{PhiandTilde}). Using formula~\eqref{ChangeOfVariables}, we obtain
\begin{equation}\label{EstimateInL2}
\|\Phi\|_{L_2}^2 = \frac12\iint\limits_{(-\eps,\eps)^2} |\varphi(s)|^2|\varphi(t)|^2|h'(s) - h'(t)|\,ds\,dt \lesssim \|\varphi\|^4_{L_2(\mathbb{R})}.
\end{equation}
So,
$|\scalprod{\zeta}{\varphi}| \lesssim \|\varphi\|_{L_2(\mathbb{R})}$.
Thus,\emph{~$\zeta$ is a square summable function}.

Since~$\zeta$ is a summable function, the convolution can be computed directly using formulas~~\eqref{ConvolutionFormula} and~\eqref{ChangeOfVariables},
\begin{equation*}
\Pi_{\Sigma}^*[\zeta]*\Pi_{\Sigma}^*[\zeta](\xi,\eta) = \frac{\zeta\big(s,h(s)\big)\zeta\big(t,h(t)\big)}{2|h'(s)-h'(t)|},\quad s = s(\xi,\eta), t=t(\xi,\eta).
\end{equation*}
We are going to prove that the function on the right-hand side does not belong  to $L_2(\mathbb{R}^2)$. We may assume that the set~$\{s \mid |\zeta(s,h(s))| \geq 1\}$ has positive measure. Let~$I$ be an interval such  that
\begin{equation}\label{DensityLebesguePoint}
\Big|\{s\mid |\zeta(s,h(s)) |\geq 1\} \cap I\Big| \geq 0.9 |I|
\end{equation}
(such an interval exists around any Lebesgue point of the set~$\{s \mid |\zeta(s,h(s))| \geq 1\}$). Consider the set~$S_N \subset\mathbb{R}^2$ of the points~$(\xi,\eta)$ for which~$s(\xi,\eta),t(\xi,\eta) \in I$ and~$ \frac{1}{N} \leq |s-t| \leq \frac{1.1}{N}$. This set lies in a~$N^{-2}$-neighborhood of~$\Sigma$, and by the condition that~$\Sigma$ has non-vanishing curvature,~$|S_N|\asymp N^{-2}$. Moreover,
\begin{equation*}
|h'(s) - h'(t)| \asymp N^{-1}, \quad (\xi,\eta) \in S_N.
\end{equation*} 
It is not hard to see that 
\begin{equation}\label{Bound}
\bigg|\Big\{(\xi,\eta) \in S_N\,\Big|\; |\zeta(s,h(s))| \geq 1, |\zeta(t,h(t))| \geq 1\Big\}\bigg| \geq \frac12 |S_N|.
\end{equation} 
Indeed,~\eqref{DensityLebesguePoint} and the fact that~$|h'(s) - h'(t)|$ is almost constant on~$S_N$ (we have assumed that~$h''$ is almost constant) gives
\begin{equation*}
\Big|\Big\{(\xi,\eta) \in S_N \,\Big|\; |\zeta(s,h(s))| \leq 1 \Big\}\Big| \leq \frac{1}{5}|S_N|, \quad \Big|\Big\{(\xi,\eta) \in S_N \,\Big|\; |\zeta(t,h(t))| \leq 1 \Big\}\Big| \leq \frac{1}{5}|S_N|,
\end{equation*}
which leads to~\eqref{Bound}.

Therefore, by~\eqref{Bound}, the function~$(\xi,\eta)\mapsto\frac{\zeta(s,h(s))\zeta(t,h(t))}{|h'(s) - h'(t)|}$ is not less than~$N$ on the set of measure at least~$\frac12|S_N|\asymp N^{-2}$. This contradicts~\eqref{ConvolutionCondition}, because the sets~$S_N$ are disjoint.\qed

\section{One more definition of~$\Si L_p$}\label{SPolynom}
% Polynomial definition
In this section we restrict our attention to a more specific class of surfaces. Let~$\Sp$ be a polynomial in~$d$ variables that grows at infinity. Let~$\Sigma$ be its zero set, i.e.
\begin{equation}\label{Polynomial_surface}
\Sigma =\{\xi \in \mathbb{R}^d\mid \Sp(\xi) = 0\}.
\end{equation} 
We assume that~$\nabla \Sp$ does not vanish on~$\Sigma$. Thus,~$\Sigma$ satisfies the conditions we imposed on it before. The aim of this section is to give an alternative definition of~$\Si L_p$ in this special case.
\begin{Th}\label{Polynom}
If~$\Sp$ is a polynomial in~$d$ variables that grows at infinity,~$\Sigma$ is given by~\eqref{Polynomial_surface}, and~$\nabla \Sp$ does not vanish on~$\Sigma$, then
\begin{equation*}
\Si L_p = \clos_{L_p}\bigg(\Big\{\Sp\Big(\frac{\partial}{2\pi i}\Big) [g] \,\Big|\; g \in \mathcal{S}(\mathbb{R}^d) \Big\}\bigg).
\end{equation*} 
\end{Th}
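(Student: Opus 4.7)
The plan is to prove the two inclusions separately. The inclusion $\clos_{L_p}\{\Sp(\partial/2\pi i)[g]\mid g \in \mathcal{S}\} \subseteq \Si L_p$ is routine: for any Schwartz $g$, the function $f = \Sp(\partial/2\pi i)[g]$ is again Schwartz and its Fourier transform $\hat{f}(\xi) = \Sp(\xi)\hat{g}(\xi)$ vanishes on $\Sigma = \{\Sp = 0\}$, so $f$ lies in the set whose $L_p$-closure defines $\Si L_p$.

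For the reverse inclusion we intend to prove a stronger statement: every Schwartz $f$ with $\hat f|_\Sigma = 0$ can be represented \emph{exactly} as $\Sp(\partial/2\pi i)[g]$ for some $g \in \mathcal{S}(\mathbb{R}^d)$, without any approximation. On the Fourier side this becomes the division problem $\Sp \cdot \hat g = \hat f$, so the task reduces to showing that the candidate $\hat g := \hat f/\Sp$ is a Schwartz function. Smoothness across $\Sigma$ will be handled by Hadamard's division lemma: since $\nabla \Sp \neq 0$ on $\Sigma$, one may locally take $\Sp$ as one of the coordinates, and the vanishing of $\hat f$ on $\Sigma$ together with the fundamental theorem of calculus in the transversal direction writes $\hat f$ locally as $\Sp$ times a $C^\infty$ function. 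Patching these local quotients with the obvious quotient $\hat f/\Sp$ defined off $\Sigma$ produces a global $C^\infty$ function $\hat g$.

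Rapid decay of $\hat g$ is handled by the compactness of $\Sigma$ together with the growth of $\Sp$: outside a sufficiently large ball $B_R \supset \Sigma$ one has $|\Sp(\xi)| \geq 1$, so $1/\Sp$ and all its derivatives are polynomially bounded rational functions on $\mathbb{R}^d \setminus B_R$, and multiplication by the Schwartz function $\hat f$ preserves rapid decay; on the compact region $B_R$ the smoothness of $\hat g$ gives bounded derivatives automatically. The main technical point, and the place where the non-degeneracy $\nabla \Sp \neq 0$ is essential, is the smooth extension of $\hat f/\Sp$ across $\Sigma$; without this hypothesis the quotient could fail to extend even continuously. Once $\hat g \in \mathcal{S}(\mathbb{R}^d)$ is constructed, setting $g = \mathcal{F}^{-1}[\hat g]$ gives $\Sp(\partial/2\pi i)[g] = f$, and the reverse inclusion follows after taking $L_p$-closures.
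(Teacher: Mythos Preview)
Your proposal is correct and follows essentially the same route as the paper: both reduce to the equality of the pre-closure sets, handle the easy inclusion by noting $\widehat{\Sp(\partial/2\pi i)[g]}=\Sp\cdot\hat g$ vanishes on $\Sigma$, and for the hard inclusion solve the division problem $\hat g=\hat f/\Sp$ by combining a local Hadamard-type argument near $\Sigma$ (the paper isolates this as Lemma~\ref{DivisionLemma}, using the change of variables $\eta_d=\Sp(\xi)$) with the growth of $\Sp$ at infinity to control decay. The only cosmetic difference is that the paper introduces an explicit $C_0^\infty$ cutoff equal to $1$ near $\Sigma$ to separate the compactly supported division problem from the behavior at infinity, whereas you argue directly that $\hat f/\Sp$ is globally $C^\infty$ and then bound its derivatives outside a large ball; both arguments are equivalent.
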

The notation~$\Sp(\partial)$ denotes the differential polynomial, e.g. the case of~$\Sigma = \Sph^{d-1}$ corresponding to~$\Sp(\xi) = \sum_{1}^d\xi_i^2 - 1$ results in the Helmholtz operator:
\begin{equation*}
\Sp\Big(\frac{\partial}{2\pi i}\Big)[g] = -\Big(\frac{\Delta}{4\pi^2} + 1\Big)[g].
\end{equation*}
We need a simple lemma about division of smooth functions.

\begin{Le}\label{DivisionLemma}
		Let~$P$ be a polynomial in~$d$ variables. Suppose that~$\varphi \in C_0^{\infty}(\mathbb{R}^d)$ vanishes on its zero set~$\{\xi\mid P(\xi)=0\}$ and~$\nabla P$ does not vanish on this set. In such a case,~$\frac{\varphi}{P} \in C_0^{\infty}(\mathbb{R}^d)$.
		\end{Le}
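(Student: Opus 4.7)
The plan is to argue locally and then patch, reducing the general case to Hadamard's lemma in one variable. Away from the zero set $Z = \{P=0\}$ the function $P$ is smooth and nonvanishing, so $\varphi/P$ is smooth on $\mathbb{R}^d\setminus Z$ without any further argument, and since $\varphi$ has compact support, so does $\varphi/P$. The whole content of the lemma is therefore the smoothness across $Z$.

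Fix a point $\xi_0\in Z\cap\supp\varphi$. By assumption $\nabla P(\xi_0)\ne 0$, so some partial derivative, say $\partial_d P(\xi_0)$, is nonzero. The implicit function theorem supplies a neighborhood $U$ of $\xi_0$ and a smooth change of coordinates $\xi\mapsto y=(y_1,\dots,y_d)$ on $U$ in which $P$ takes the form $P(\xi)=y_d\,u(y)$ for some $u\in C^{\infty}(U)$ with $u$ nonvanishing on $U$ (concretely one takes $y_i=\xi_i$ for $i<d$ and $y_d=P(\xi)/\partial_d P(\xi_0)$ after shrinking $U$, or just uses the straightening of the level surface $Z$ given by implicit functions). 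In these coordinates $Z\cap U=\{y_d=0\}$, and the hypothesis that $\varphi$ vanishes on $Z$ becomes $\varphi(y_1,\dots,y_{d-1},0)=0$ on $U$.

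Now I would invoke Hadamard's lemma in the last variable: writing
\begin{equation*}
\varphi(y_1,\dots,y_{d-1},y_d)=\int_0^1 \partial_d\varphi(y_1,\dots,y_{d-1},ty_d)\,dt\cdot y_d,
\end{equation*}
we get $\varphi=y_d\,\psi$ on $U$ with $\psi\in C^{\infty}(U)$. Hence on $U$,
\begin{equation*}
\frac{\varphi}{P}=\frac{y_d\psi}{y_d u}=\frac{\psi}{u}\in C^{\infty}(U),
\end{equation*}
since $u$ is nowhere zero on $U$. This takes care of smoothness at every point of $Z$.

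Finally I would glue: cover $Z\cap\supp\varphi$ by finitely many such neighborhoods $U_j$ (using compactness of $\supp\varphi$), add the open set $\mathbb{R}^d\setminus Z$, and observe that on overlaps the various local definitions of $\varphi/P$ coincide with the pointwise quotient. Therefore $\varphi/P$ is a well-defined $C^\infty$ function on $\mathbb{R}^d$, and its support is contained in $\supp\varphi$, hence compact. The only step that requires any care is the local straightening of $P$ near $Z$; once that is in place, Hadamard's lemma closes the argument immediately.
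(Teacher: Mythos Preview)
Your proof is correct and follows essentially the same route as the paper: localize near a point of~$Z$, straighten~$P$ via the change of variables~$y_j=\xi_j$ for~$j<d$, $y_d\sim P(\xi)$ (valid since~$\partial_d P\ne 0$), and reduce to dividing by a single coordinate. The only cosmetic difference is that the paper handles the one-dimensional division~$\varphi(\xi)/\xi$ by Taylor expansion, whereas you use Hadamard's integral representation; these are equivalent and your version is arguably cleaner.
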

		\begin{proof}
		We may assume that~$\varphi$ is supported in a small ball whose center~$\xi_0$ lies on the surface~$\{\xi \in \mathbb{R}^d\mid P(\xi)=0\}$. We may also assume that~$\partial_d P$ does not vanish on~$\supp \varphi$.
		
		Let us first consider the easiest case~$d=1$ and~$P(\xi) = \xi$. It is clear that the function~$\frac{\varphi(\xi)}{\xi}$ is infinitely differentiable outside zero. Let us prove that it is~$k$ times differentiable at zero. Consider the order~$k+1$ Taylor polynomial of~$\varphi$ at~$0$:
		\begin{equation*}
		\varphi(\xi) = \sum\limits_{i=1}^{k+1}\frac{\varphi^{(i)}(0)\xi^i}{i!} + O(\xi^{k+2}).
		\end{equation*}  		
		Surely, 
		\begin{equation*}
		\frac{\varphi(\xi)}{\xi} = \sum\limits_{i=0}^{k}\frac{\varphi^{(i+1)}(0)\xi^i}{(i+1)!} + O(\xi^{k+1}),
		\end{equation*}  
		which shows that~$\frac{\varphi(\xi)}{\xi}$ is~$k$ times differentiable at zero. 
				
		The general case  may be derived from the trivial one-dimensional case considered with the help of the change of variables~$\xi \to \eta$,~$\eta_j = \xi_j$ for~$j=1,2,\ldots,d-1$ and~$\eta_d = P(\xi)$. Due to the condition~$\partial_d P \ne 0$, such a change of variables is a~$C^{\infty}$-homeomorphism of~$\supp \varphi$ onto its image. Thus, this change of variables takes~$\varphi$ to a function~$\tilde{\varphi}\in C_0^{\infty}(\mathbb{R}^d)$ and~$P$ to a simple polynomial~$\eta_d$. Applying the already considered statement for the case~$d=1$ to~$\tilde{\varphi}$, we obtain that~$\frac{\tilde{\varphi}}{\eta_d} \in C_0^{\infty}(\mathbb{R}^d)$. Consequently,~$\frac{\varphi}{P} \in C_0^{\infty}(\mathbb{R}^d)$.
		\end{proof}
		\begin{Rem}
		In fact, we did not use that~$P$ is a polynomial. It could have been a~$C^{\infty}$ function. Lemma~\textup{\ref{DivisionLemma}} is a common knowledge, for example, it may be found in~\textup{\cite{MS}}.
		\end{Rem}
		
\paragraph{Proof of Theorem~\ref{Polynom}.}
By Definition~\ref{TheSpace}, it suffices to prove that
\begin{equation*}
\big\{f \in \mathcal{S}(\mathbb{R}^d)\,\big|\; \forall \xi \in \Sigma\quad \hat{f}(\xi) = 0\big\} = \Big\{\Sp\Big(\frac{\partial}{2\pi i}\Big) [g] \,\Big|\; g \in \mathcal{S}(\mathbb{R}^d) \Big\}.
\end{equation*}
Clearly, the latter set belongs to the former. Thus, we need to prove that for every~$f \in \mathcal{S}(\mathbb{R}^d)$ such that~$\hat{f}$ vanishes on~$\Sigma$, there exists~$g \in \mathcal{S}(\mathbb{R}^d)$ such that~$\Sp\Big(\frac{\partial}{2\pi i}\Big) [g] = f$. In the Fourier transform world, this statement looks like Lemma~\ref{DivisionLemma}. The only difference is that the class~$C_0^{\infty}$ is replaced by the Schwartz class. To overcome this difficulty, one has to multiply the function~$\hat{f} = \varphi$ by a~$C_0^{\infty}$ function that equals one in a neighborhood of~$\Sigma$, and use the fact that~$\Sp$ grows at infinity.\qed

The definition of~$\Si L_p$ provided by Theorem~\ref{Polynom} can be naturally generalized.
\begin{Def}\label{NewHardy}
Let~$k \in \mathbb{N}$, define the space~$\Si L_p^k$ by the formula
\begin{equation*}
\Si L_p^k = \clos_{L_p}\bigg(\Big\{\Sp^k\Big(\frac{\partial}{2\pi i}\Big) [g] \,\Big|\; g \in \mathcal{S}(\mathbb{R}^d) \Big\}\bigg).
\end{equation*} 
\end{Def}
\begin{Conj}
For every~$d$,~$p$, and~$\Sp$ fixed, there exists~$l$ such that~$\Si L_p^l = \Si \!\! L_p^{l+1} = \Si \!\! L_p^{l+2} = \ldots$, i.e. the chain of these spaces stabilzes.
\end{Conj}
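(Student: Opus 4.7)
Because $\Sp^{k+1}(\partial/2\pi i)[g] = \Sp^k(\partial/2\pi i)\bigl[\Sp(\partial/2\pi i)[g]\bigr]$ and $\Sp(\partial/2\pi i)[g] \in \mathcal{S}(\mathbb{R}^d)$ whenever $g \in \mathcal{S}(\mathbb{R}^d)$, the chain $\Si L_p^k$ is decreasing. Thus it stabilizes iff the increasing chain of annihilators $A_k \df \Ann_{L_q}(\Si L_p^k)$, where $\frac{1}{p}+\frac{1}{q}=1$, stabilizes, so the plan is to describe $A_k$ explicitly and then show that it cannot grow indefinitely.

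The characterization proceeds as in Theorem~\ref{Annihilator}, but with $\Pi_\Sigma$ replaced by the jet operator $f \mapsto (f|_\Sigma, \partial_\nu f|_\Sigma, \ldots, \partial_\nu^{k-1} f|_\Sigma)$; alternatively, by integration by parts in $\langle \Sp^k(\partial/2\pi i)[f], g \rangle = \langle f, \Sp^k(-\partial/2\pi i)[g] \rangle$ followed by Fourier transform, one gets
\begin{equation*}
A_k = \bigl\{\, g \in L_q(\mathbb{R}^d) \,:\, \Sp^k(-\xi)\,\hat{g}(\xi) = 0 \text{ in } \mathcal{S}'(\mathbb{R}^d)\,\bigr\}.
\end{equation*}
Since $\nabla\Sp$ does not vanish on $\Sigma$, the map $\xi \mapsto \Sp(-\xi)$ is a local defining function with simple zeros for the reflected surface $-\Sigma$, and the classical structure theorem for distributions supported on a smooth hypersurface identifies $A_k$ with those $g \in L_q$ whose $\hat g$ is a transversal-jet distribution on $-\Sigma$ of order at most $k-1$: locally, $\hat g = \sum_{j=0}^{k-1} \zeta_j\,\partial_\nu^j \delta_{-\Sigma}$ for some distributions $\zeta_j$ on $-\Sigma$.

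The heart of the proof is a uniform bound: there should exist $l_0 = l_0(d, p, \Sigma)$ such that every $g \in L_q$ with $\hat g$ supported on $-\Sigma$ has transversal order at most $l_0$; granted this, $A_{l_0+1} = A_{l_0+2} = \cdots$ and the chain stabilizes at $k = l_0 + 1$. To prove the bound, I would localize $\hat g$ by multiplying it by a smooth bump $\chi$ supported near a fixed $\xi_0 \in -\Sigma$; then $g_\chi \df \mathcal{F}^{-1}[\chi \hat g] = g * \mathcal{F}^{-1}[\chi]$ is smooth and lies in $L_q$ by Young's inequality, and in a chart straightening $-\Sigma$ to the hyperplane $\{\xi_d = 0\}$, $\chi\hat g$ takes the form $\sum_{j=0}^{m} \zeta_j(\xi')\,\delta_0^{(j)}(\xi_d)$. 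A stationary-phase asymptotic for $g_\chi(x)$ along rays conormal to $-\Sigma$ at $\xi_0$ then yields $|g_\chi(x)| \asymp |x|^{m-(d-1)/2}$ in a solid-angle neighborhood of that direction when the curvature of $-\Sigma$ is non-degenerate at $\xi_0$, and purely polynomial growth of degree $m$ in the transversal direction when it degenerates; in both cases, $L_q$-integrability forces an explicit ceiling on $m$ (for instance $m < \tfrac{d-1}{2} - \tfrac{d}{q}$ in the curved case, and $m = 0$ in the flat case).

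The main obstacle is this last step. To extract the leading $|x|^{m-(d-1)/2}$ asymptotic one must argue that the $j = m$ contribution is not cancelled by the lower-order pieces, which is delicate because $\zeta_m$ is only a distribution on $-\Sigma$ and not a function; this probably requires either a preliminary regularization of $\zeta_m$ (e.g.\ averaging as in Lemma~\ref{ApprAnLe}) or a careful choice of dual test direction exploiting a point of the wave-front of $\zeta_m$. Points of degenerate curvature on $-\Sigma$ should be handled by reverting to the flat-case calculation, where polynomial growth of positive degree in the transversal direction is obviously incompatible with $L_q$. Once the local bound on $m$ is in hand, a finite partition of unity on the compact surface $-\Sigma$ converts it into a uniform global $l_0$, completing the proof of the conjecture.
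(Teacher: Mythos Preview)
The paper does not prove this statement: it is explicitly recorded as a \emph{Conjecture} (immediately after Definition~\ref{NewHardy}) and is left open. There is therefore no ``paper's own proof'' to compare against; what you have written is an attempted resolution of an open problem, not an alternative proof of a theorem in the text.

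As to the content of your plan: the reduction to the annihilators $A_k$ and the identification of $A_k$ with $L_q$-functions whose Fourier transform is a transversal jet of order $\leq k-1$ on the surface are sound and match the spirit of Theorem~\ref{Annihilator} (your care with $-\Sigma$ versus $\Sigma$ is appropriate; the paper suppresses this reflection). The decisive step, however --- extracting a \emph{lower} bound of the form $|g_\chi(x)| \gtrsim |x|^{m-(d-1)/2}$ on a set of positive measure when the top coefficient $\zeta_m$ is merely a distribution on the surface --- is exactly the gap you yourself flag, and it is a real one. Stationary phase in the form you invoke requires a smooth (or at least continuous, nonvanishing) amplitude; for a genuinely distributional $\zeta_m$ one has neither a pointwise asymptotic nor an obvious way to rule out cancellation with the lower layers. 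Your suggested fixes (regularize $\zeta_m$ by averaging, or probe along a wave-front direction) are plausible heuristics, but neither is carried out, and the averaging trick of Lemma~\ref{ApprAnLe} is tied to the rotational symmetry of $\Sph^{d-1}$, which a general $\Sigma=\{\Sp=0\}$ does not have. So what you have is a coherent strategy with the key analytic estimate missing --- which is consistent with the statement's status in the paper as a conjecture.
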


\section{Conjectures}
The study of~$\Si L_p$ spaces is far from being complete. We list several conjectures the author is interested in.

First, it is natural to replace the sphere in Theorem~\ref{DensityWithRO} by a more general manifold. It suffices to prove the conjecture below.
\begin{Conj}
Let~$p \in [1,\infty)$ be such that~$\Sigma$ admits the~$L_p$ restriction property. The set
\begin{equation*}
\Big\{g \in L_q(\mathbb{R}^d)\,\Big|\; \exists \zeta \in C^{\infty}(\Sigma)\quad \hbox{such that}\quad \hat{g} = \Pi_{\Sigma}^*[\zeta]\Big\}
\end{equation*}
is dense in~$\Ann_{L_q}(\Si L_p)$.
\end{Conj}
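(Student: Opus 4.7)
The strategy I would pursue is to mimic the proof of Lemma~\ref{ApprAnLe}, with the global rotation averaging replaced by a localized smoothing in charts. Using a finite smooth partition of unity $\{\chi_{\alpha}\}$ on $\Sigma$ subordinated to an atlas, I would write $\zeta = \sum_{\alpha}\chi_{\alpha}\zeta$ and, by linearity and Theorem~\ref{Annihilator}, reduce to the case where $\zeta$ is supported in a single chart $U_{\alpha}$. Identifying $U_{\alpha}$ with an open set $V\subset\mathbb{R}^{d-1}$ via a chart map $\phi$, the distribution $\zeta$ pulls back to a compactly supported $\tilde{\zeta}$ on $V$. Taking a standard mollifier $\psi_n$ on $\mathbb{R}^{d-1}$, I would then set $\tilde{\zeta}_n = \tilde{\zeta}\ast\psi_n$, push back to $\Sigma$ to obtain $\zeta_n \in C^{\infty}(\Sigma)$, and define the candidate approximants $g_n = \mathcal{F}^{-1}[\Pi_{\Sigma}^*[\zeta_n]]$.

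The main obstacle is showing $g_n \to g$ in $L_q(\mathbb{R}^d)$. In the spherical case this followed from the explicit formula $g_n(x) = \int_{\OO_d}g(Tx)\varphi_n(T)\,d\mu(T)$, which used that rotations commute with the Fourier transform and preserve $\Sph^{d-1}$. For a general $\Sigma$ no group acts transitively by isometries, so mollification of $\tilde{\zeta}$ in chart coordinates does not correspond to any clean operation on $g$ in physical space; the convergence has to be extracted from the restriction hypothesis itself.

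The natural way to bring $\R(\Sigma,p,1)$ into play is through duality: for Schwartz $f$ one has
\[
\scalprod{g-g_n}{f} = \scalprod{\zeta-\zeta_n}{\R_{\Sigma}[\mathcal{F}^{-1}[f]]},
\]
and the restriction assumption yields a uniform bound $\|\R_{\Sigma}[\mathcal{F}^{-1}[f]]\|_{L_1(\Sigma)}\lesssim \|f\|_{L_p}$. Convergence $g_n \to g$ in $L_q$ thus reduces to $\scalprod{\zeta-\zeta_n}{\cdot}$ tending to zero uniformly on the image $\{\R_{\Sigma}[\mathcal{F}^{-1}[f]] : \|f\|_{L_p}\leq 1\}\subset L_1(\Sigma)$, a statement strictly stronger than weak-$*$ convergence of $\zeta_n$ to $\zeta$ since this image need not be relatively compact. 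This uniformity is the delicate analytic point and what I expect to be the main obstacle; to overcome it one would likely need either to sharpen the hypothesis to a tangentially regular restriction estimate, or, as suggested in the remark after Theorem~\ref{DensityWithRO}, to bypass the annihilator entirely and build an extension operator $L_1(\Sigma)\to L_p(\mathbb{R}^d)$ dual to $\R_{\Sigma}$, realizing on the Fourier side the Lemma~\ref{Extension} construction and converting the proof into a direct correction argument on the $L_p$ side.
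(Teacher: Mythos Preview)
This statement is a \emph{conjecture} in the paper, not a proved result; there is no proof in the paper to compare your attempt against. Immediately after stating it, the paper remarks that the argument of Lemma~\ref{ApprAnLe} ``heavily relied on the rotational symmetry of the sphere'' and that a similar reasoning works only for more general quadratic surfaces. So you are attempting to prove an open problem, not reproducing a known argument.

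Your proposed strategy is the natural one, and your diagnosis of the obstacle is accurate. The localization step is legitimate: multiplying $\zeta$ by a smooth cutoff $\chi_\alpha$ on $\Sigma$ amounts, after extending $\chi_\alpha$ to a function $\tilde{\chi}_\alpha\in C_0^\infty(\mathbb{R}^d)$ with $\tilde{\chi}_\alpha|_\Sigma=\chi_\alpha$, to multiplying $\hat{g}$ by $\tilde{\chi}_\alpha$, hence to convolving $g$ with a Schwartz function, which preserves membership in $L_q$. The gap is exactly where you place it. In the spherical case the identity $g_n(x)=\int_{\OO_d} g(Tx)\varphi_n(T)\,d\mu(T)$ immediately gave a uniform $L_q$ bound for $g_n$ and pointwise-in-$r$ convergence of spherical means, so dominated convergence closed the argument. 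Chart-wise mollification of $\zeta$ gives only weak-$*$ convergence $\zeta_n\to\zeta$ in $(C^\infty(\Sigma))'$, and your duality reduction requires instead that $\scalprod{\zeta-\zeta_n}{\cdot}\to 0$ uniformly on the image of the unit $L_p$-ball under $\R_\Sigma$, a bounded but in general non-compact subset of $L_1(\Sigma)$. Nothing in the bare hypothesis $\R(\Sigma,p,1)$ forces this uniformity, and you supply no alternative mechanism; this is precisely why the statement remains a conjecture. The two escape routes you sketch (assuming a tangentially regular restriction estimate, or constructing a bounded extension $L_1(\Sigma)\to L_p$ dual to $\R_\Sigma$ and running a direct correction on the $L_p$ side) are sensible research directions, but neither is established in the paper and neither is known in general.
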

We have proved the conjecture for the case~$\Sigma = \Sph^{d-1}$, our reasoning heavily relied on the rotational symmetry of the sphere. A similar reasoning works for the case of a more general quadratic surface.

The proofs of Theorems~\ref{DensityWithoutMO} and~\ref{d=2} are based on duality. In fact, we prove an uncertainty principle (in the sense of the book~\cite{HJ}): a Fourier transform of a small function (in our case small means fastly decaying) cannot be small (here small means generated by a distribution ``depending on less number of variables''). Our proof of Theorem~\ref{DensityWithoutMO} uses two properties of~$\Sigma$: the continuity of the normal and its dimension. The author believes that the structure of the distribution~$\zeta$ may be omitted.
\begin{Conj}
There does not exists a non-zero distribution~$\mathfrak{Z} \in \mathcal{S}^{\prime}(\mathbb{R}^d)$ such that~$\mathcal{F}^{-1} [\mathfrak{Z}]\in L_q$,~$ q <\frac{2d}{d-1}$, and~$\supp \mathfrak{Z} \subset \Sigma$. In the case where~$\Sigma$ is convex, there does not exist a distribution~$\mathfrak{Z}$ such that~$\mathcal{F}^{-1} [\mathfrak{Z}]\in L_{\frac{2d}{d-1}}$ and~$\supp \mathfrak{Z} \subset \Sigma$.
\end{Conj}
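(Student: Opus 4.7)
My plan is to try to reduce the conjecture to Theorem~\ref{DensityWithoutMO} (and, for the convex endpoint, Theorem~\ref{d=2}) via the classical structure theorem for distributions supported on a smooth hypersurface. After localising $\mathfrak{Z}$ with a partition of unity on $\Sigma$ and choosing coordinates in which $\Sigma$ is the graph $\xi_d = h(\xi')$ near a chosen point, the structure theorem provides
\[
\mathfrak{Z} = \sum_{j=0}^N \partial_d^j\bigl[\Pi_\Sigma^*[\zeta_j]\bigr], \qquad \zeta_j \in \bigl(C^{\infty}(\Sigma)\bigr)',
\]
for some finite $N$, and consequently
\[
\mathcal{F}^{-1}[\mathfrak{Z}](x', x_d) = \sum_{j=0}^N (2\pi i x_d)^j F_j(x', x_d), \qquad F_j = \mathcal{F}^{-1}\bigl[\Pi_\Sigma^*[\zeta_j]\bigr].
\]
The task reduces to showing that this expression cannot lie in $L_q$ unless every $\zeta_j$ vanishes.

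I would proceed by induction on $N$, the base case $N=0$ being exactly Theorem~\ref{DensityWithoutMO} via Theorem~\ref{Annihilator}. For the inductive step, the idea is to isolate the top-order coefficient $F_N$ by dyadic slabs in the $x_d$ direction. On the slab $\{R \leq |x_d| \leq 2R\}$ the sum is dominated, when $R$ is large, by $R^N F_N$; dividing by $R^N$ and adapting the slice-plus-H\"older computation~\eqref{SquareSliceBound} to the slab (with rescaled weight) should give weighted $L_2$ control of $F_N$ at infinity in $x_d$. Since $F_N$ is entire (by Paley--Wiener, because $\Pi_\Sigma^*[\zeta_N]$ is compactly supported after localisation), such control should propagate to the slice $x_d=0$ and then to $\zeta_N$ via Plancherel exactly as in the proof of Theorem~\ref{DensityWithoutMO}; Lemma~\ref{Frostman} then forces $\zeta_N = 0$, collapsing the induction by one step.

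The main obstacle is precisely this polynomial isolation. The $F_j$ are not independent — they all originate from distributions on the same surface — and in principle cancellations between them could make the dominant term smaller than naively expected. Formalising the dyadic slab argument probably requires a Bernstein-type inequality in the transversal direction for each $F_j$ (which is plausible after localisation but not automatic). An alternative route is a test-function argument: choose $\psi \in \mathcal{S}$ whose Fourier transform behaves near $\Sigma$ like $(\xi_d - h(\xi'))^N \chi(\xi')$ with $\chi$ compactly supported in the localisation region, so that $\scalprod{\mathfrak{Z}}{\hat\psi}$ isolates $\scalprod{\zeta_N}{\chi}$ and kills the lower-order terms; turning such qualitative pairings into a quantitative $L_q$ bound on $\mathcal{F}^{-1}\bigl[\Pi_\Sigma^*[\zeta_N]\bigr]$ is the genuinely new content.

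For the convex endpoint case $q=\frac{2d}{d-1}$, one would hope to imitate the proof of Theorem~\ref{d=2}: write $\mathfrak{Z}*\mathfrak{Z} \in L_2$ as in~\eqref{ConvolutionCondition}, parameterise $\Sigma+\Sigma$ off the diagonal by $\Sigma \times \Sigma$, and use the change of variables formula with Jacobian equal to the determinant of the difference of shape operators at the two summands, which is non-degenerate thanks to non-vanishing Gaussian curvature. The Knapp-type packing argument giving $|S_N|\asymp N^{-2}$ should generalise with the appropriate dimensional exponent, but the endpoint is fragile throughout restriction theory and I expect this to be strictly harder than the subcritical case.
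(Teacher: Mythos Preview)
There is nothing to compare against: the statement is listed in the paper as an open \emph{conjecture}, and the paper offers no proof. The author only remarks that the proof of Theorem~\ref{DensityWithoutMO} uses the special structure $\hat g=\Pi_\Sigma^*[\zeta]$ and expresses the belief that this hypothesis can be removed.

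Your reduction is correct and in fact goes further than you realise. The localisation and the structure theorem give, in suitable coordinates, $\mathfrak{Z}=\sum_{j=0}^{N}\partial_d^{\,j}\bigl[\Pi_\Sigma^*[\zeta_j]\bigr]$ with $\zeta_j\in(C^\infty(\Sigma))'$; the base case $N=0$ is precisely what the proof of Theorem~\ref{DensityWithoutMO} establishes. The step you flag as ``the genuinely new content'' --- isolating $\zeta_N$ --- is not actually an obstacle, and your ``alternative route'' already contains the missing observation. Take $\psi\in C_0^\infty(\mathbb{R}^d)$ equal to $(\xi_d-h(\xi'))^{N}$ near the (localised) support of $\mathfrak{Z}$. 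On the physical side this is convolution with $\check\psi\in\mathcal{S}\subset L_1$, so $\mathcal{F}^{-1}[\psi\mathfrak{Z}]=\check\psi*\mathcal{F}^{-1}[\mathfrak{Z}]\in L_q$ by Young's inequality; on the spectral side, Leibniz and the fact that $\partial_d^{\,k}\psi|_\Sigma=0$ for $k<N$ give $\psi\mathfrak{Z}=(-1)^N N!\,\Pi_\Sigma^*[\zeta_N]$. Hence $\mathcal{F}^{-1}\bigl[\Pi_\Sigma^*[\zeta_N]\bigr]\in L_q$, and the proof of Theorem~\ref{DensityWithoutMO} forces $\zeta_N=0$. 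Induction then kills all the $\zeta_j$. No dyadic slab argument, Bernstein inequality, or control of cancellations between the $F_j$ is needed. The same trick, combined with Theorem~\ref{d=2}, settles the endpoint $q=4$ when $d=2$.

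What remains genuinely open --- and your proposal does not address it --- is the endpoint $q=\frac{2d}{d-1}$ for convex $\Sigma$ in dimension $d\geq 3$. Here even the base case $N=0$ is not available in the paper: Theorem~\ref{d=2} is two-dimensional, and its proof (the convolution square and the parametrisation of $\Sigma+\Sigma$) does not transfer directly, because $\Sigma+\Sigma$ is $2(d-1)$-dimensional while the change of variables $(s,t)\mapsto(\xi,\eta)$ maps $\Sigma\times\Sigma$ into $\mathbb{R}^d$; for $d\geq 3$ this is no longer a local diffeomorphism off the diagonal, so the Jacobian computation~\eqref{ChangeOfVariables} and the packing estimate for the sets $S_N$ have no obvious analogue. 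Your sketch (``determinant of the difference of shape operators'') overlooks this dimensional mismatch.
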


It was proved in~\cite{G} that the continuity properties of the Bochner--Riesz operator of negative order improve when~$L_p$ is replaced by~$\Si L_p$. It seems that the spaces~$\Si L_p^k$ introduced in Definition~\ref{NewHardy} may be thought of as ``Hardy classes'' for Bochner--Riesz operators.
\begin{Conj}
There exists~$l \in \mathbb{N}$ depending on~$\lambda > 0$ such that the Bochner--Riesz operator of order~$\lambda$ acts from~$_{_{\Sph^{\scriptscriptstyle d-1}}}\!L_p^k$ to~$L_p$ continuously for every~$p$ and every~$k > l$. 
\end{Conj}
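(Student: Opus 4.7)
The plan is to build on Theorem~\ref{Polynom}: with $\Sp(\xi)=|\xi|^2-1$, every $F \in {_{\Sph^{d-1}}}\!L_p^k$ is an $L_p$-limit of functions $F_n=\Sp^k(\partial/(2\pi i))[g_n]$, $g_n\in\mathcal{S}(\mathbb{R}^d)$. Taking Fourier transforms gives the pointwise identity
\begin{equation*}
\widehat{B^{\lambda}F_n}(\xi)=(1-|\xi|^2)_+^{\lambda}(|\xi|^2-1)^k\hat g_n(\xi)=(-1)^k(1-|\xi|^2)_+^{\lambda+k}\hat g_n(\xi),
\end{equation*}
so $B^{\lambda}F_n=(-1)^k B^{\lambda+k}g_n$. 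For $\lambda+k>(d-1)/2$ --- achieved by taking $l=\lceil(d-1)/2-\lambda\rceil$ --- the Bochner--Riesz kernel of order $\lambda+k$ is integrable and $B^{\lambda+k}$ is bounded on every $L_p$. The question thus reduces to controlling $\|g_n\|_{L_p}$ by $\|F_n\|_{L_p}$.

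Such a uniform bound cannot hold, because $\Sp^k$ is not invertible on $L_p$. To bypass this I would combine the identity with a dyadic Littlewood--Paley decomposition in frequency based on distance to $\Sph^{d-1}$. Fix a smooth partition $1=\chi_{\infty}+\sum_{j\geq j_0}\chi_j$ with $\supp\chi_j\subset\bigl\{|\,|\xi|-1\,|\sim 2^{-j}\bigr\}$, and split $B^{\lambda}F_n=T_{\infty}F_n+\sum_{j}T_{j}F_n$, where $T_j$ has symbol $\chi_j(\xi)(1-|\xi|^2)_+^{\lambda}$. The outer piece $T_{\infty}$ convolves with a Schwartz kernel and is harmless. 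For each $T_j$, rewriting the symbol via the identity above as $(-1)^k\chi_j(\xi)(1-|\xi|^2)_+^{\lambda+k}$ acting on $g_n$ produces a multiplier of size $2^{-j(\lambda+k)}$ supported on a slab of thickness $2^{-j}$; after a rescaling $\xi\mapsto 1+2^{-j}\eta$ one should obtain a Mihlin--H\"ormander-type $L_p$-bound with a gain of $2^{-j(\lambda+k-\alpha(d,p))}$, in which the factor $2^{-jk}$ coming from $F_n=\Sp^k g_n$ effectively converts annular $L_p$-norms of $g_n$ into annular $L_p$-norms of $F_n$.

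The main obstacle is the reassembly of the dyadic pieces in $L_p$ for $p\neq 2$: at $p=2$ one has Plancherel and quasi-orthogonality, but at a general $p$ one needs a Littlewood--Paley-type inequality adapted to the anisotropic thin-annulus decomposition near the sphere, and the loss in passing from pointwise bounds on $\hat F_n$ to genuine $L_p$-bounds is subtle. A possibly cleaner alternative is Stein-style complex interpolation of the analytic family $T_z$ with symbol $(1-|\xi|^2)_+^{z}$: for $\mathrm{Re}\,z\gg 0$ the operator $T_z$ is bounded $L_p\to L_p$ with $L^1$ kernel; for $\mathrm{Re}\,z$ negative it maps ${_{\Sph^{d-1}}}\!L_p\to L_p$ continuously by the results of \cite{G}. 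Identifying ${_{\Sph^{d-1}}}\!L_p^k$ as an interpolation endpoint of ``depth'' $k$ should then yield the desired bound on the real axis, with $l$ determined by how many integer steps are needed to reach the good region; pinning down the correct functional-analytic framework for this identification is where I expect the main work to lie.
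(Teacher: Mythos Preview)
This statement appears in the paper as an open \emph{conjecture}; the paper gives no proof, so there is nothing to compare your argument against. What you have written is, as you yourself indicate, a plan with explicitly unresolved steps rather than a proof.

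Your identity $B^{\lambda}F_n=(-1)^k B^{\lambda+k}g_n$ is correct and is the obvious first move. But by itself it yields nothing: rewriting $(1-|\xi|^2)_+^{\lambda+k}\,\hat g_n(\xi)=(-1)^k(1-|\xi|^2)_+^{\lambda}\,\hat F_n(\xi)$ simply returns $B^{\lambda}$ acting on $F_n$, so any gain must come from an inequality of the form $\|B^{\lambda+k}g\|_{L_p}\lesssim\|\Sp^k(\partial/2\pi i)g\|_{L_p}$, which is the conjecture restated.

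In your dyadic plan the sentence ``the factor $2^{-jk}$ coming from $F_n=\Sp^k g_n$ effectively converts annular $L_p$-norms of $g_n$ into annular $L_p$-norms of $F_n$'' is not a technicality to be cleaned up later---it is the whole problem. Vanishing of $\hat F_n$ to order $k$ on the sphere is a pointwise Fourier-side condition; for $p\neq 2$ it does not by itself produce a $2^{-jk}$ gain in the $L_p$ norm of the frequency localization of $F_n$ to the $2^{-j}$-annulus. Converting order-of-vanishing on the spectral side into an $L_p$ gain on the physical side for $p\neq 2$ is exactly what is hard about Bochner--Riesz-type questions, and no mechanism for doing so is supplied here.

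The interpolation sketch is still more speculative. It requires identifying ${}_{\Sph^{d-1}}L_p^k$ as a complex-interpolation space in a suitable scale, and the paper itself lists interpolation stability of the $\Si L_p$ scale as a separate open conjecture. You would also need the endpoint input ``$T_z\colon {}_{\Sph^{d-1}}L_p\to L_p$ for $\mathrm{Re}\,z$ negative and \emph{every} $p$'', which is not what \cite{G} establishes.
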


More generally, it is interesting to get more information about the continuity properties of the Bochner--Riesz operators acting between~$\Si L_p^k$ spaces. We believe that interpolation techniques might be helpful here. In the conjecture below, the rectangular brackets denote the complex interpolation method, the usual brackets denote the real method. 
\begin{Conj}
The scale~$\Si L_p$ is stable under interpolation, i.e.
\begin{equation*}
[\Si L_p,\, \Si L_q]_{\theta} = \Si\!\! L_r, \quad (\Si L_p,\;\! \Si L_q)_{\theta,r} = \Si\!\! L_r ,\quad \frac{\theta}{p} + \frac{1-\theta}{q} = \frac{1}{r}, \quad 1 \leq p,q \leq \infty.
\end{equation*}
\end{Conj}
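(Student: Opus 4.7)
The plan is to split each identity into the trivial inclusion and the substantive one, and to reduce the latter to a K-closedness statement for the couple $(\Si L_p,\Si L_q)$ inside $(L_p,L_q)$.

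For the inclusion $[\Si L_p,\Si L_q]_\theta\subseteq\Si L_r$ (and its real-method analogue), I use that $\Si L_p\hookrightarrow L_p$ and $\Si L_q\hookrightarrow L_q$ are continuous inclusions of closed subspaces. By the monotonicity of the interpolation functor, $[\Si L_p,\Si L_q]_\theta$ embeds continuously into $[L_p,L_q]_\theta = L_r$. The common dense subspace $\mathcal{S}_\Sigma=\{f\in\mathcal{S}:\hat f|_\Sigma=0\}$ is dense in both $\Si L_p$ and $\Si L_q$ by Definition~\ref{TheSpace}, hence dense in the interpolation space, so every element of $[\Si L_p,\Si L_q]_\theta$ is an $L_r$-limit of functions from $\mathcal{S}_\Sigma$ and thus lies in $\Si L_r$ by definition.

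For the reverse inclusion $\Si L_r\subseteq[\Si L_p,\Si L_q]_\theta$, the natural goal is to realize each $f\in\Si L_r$ as the value $F(\theta)$ of an analytic family with correct boundary behaviour on $\Si L_p$ and $\Si L_q$. The Calder\'on ansatz $F(z)=f|f|^{\alpha(z)}\sign f$ does this for the couple $(L_p,L_q)$, but the nonlinear operation $|f|^z$ destroys the Fourier vanishing condition and the boundary values need not lie in $\Si L_p$ or $\Si L_q$. The cleanest remedy is to prove that the couple $(\Si L_p,\Si L_q)$ is K-closed in $(L_p,L_q)$: every decomposition $f=f_0+f_1$ with $f_0\in L_p$, $f_1\in L_q$ of an element $f\in\Si L_p+\Si L_q$ admits a refinement $f=g_0+g_1$ with $g_0\in\Si L_p$, $g_1\in\Si L_q$ and $\|g_i\|\lesssim\|f_i\|$. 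K-closedness yields the real-method identity tautologically and, by Pisier's theorem on K-closed subcouples, also the complex-method identity.

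The hard part will be establishing K-closedness, because there is no bounded Fourier-multiplier projection from $L_p$ onto $\Si L_p$ (a symbol equal to $1$ off $\Sigma$ and vanishing on $\Sigma$ cannot satisfy Mikhlin-type estimates), which rules out the naive ``project each piece'' argument. A natural substitute is to combine the extension operator $\Ext$ from Lemma~\ref{Extension} with the rotational averaging used in Lemma~\ref{ApprAnLe}, producing a family of smoothing operators that are uniformly bounded on both $L_p$ and $L_q$ and converge to the identity on $\mathcal{S}_\Sigma$; a weak-compactness argument should then extract the required selector. Verifying the uniform bounds in the presence of the annihilator structure from Theorem~\ref{Annihilator}, treating the endpoints $p=1$ and $q=\infty$ where the interpolation identities themselves require more care, and adapting the rotational averaging to general $\Sigma$ lacking a transitive symmetry group, are the principal difficulties I expect.
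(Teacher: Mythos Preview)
The statement you are addressing is Conjecture~4 in the paper, listed in the final ``Conjectures'' section; the paper does \emph{not} prove it and offers no argument for it. There is therefore no proof in the paper to compare your proposal against.

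Your proposal is itself not a proof but a strategy. The forward inclusion $[\Si L_p,\Si L_q]_\theta\subset\Si L_r$ is plausible, though your density step is not quite complete: what is automatic is that $\Si L_p\cap\Si L_q$ is dense in $[\Si L_p,\Si L_q]_\theta$, and you would still need to check that $\Si L_p\cap\Si L_q\subset\Si L_r$, i.e.\ that a function approximable in $L_p$ and in $L_q$ by elements of $\mathcal{S}_\Sigma$ is also approximable in $L_r$ by such elements. For the reverse inclusion you correctly reduce to K-closedness of $(\Si L_p,\Si L_q)$ in $(L_p,L_q)$ and correctly observe that no bounded projection onto $\Si L_p$ is available, but then you do not prove K-closedness---you only sketch a possible attack via rotational smoothing and explicitly list the obstacles (uniform bounds, endpoints, general $\Sigma$ without symmetry). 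In short, your proposal isolates precisely the difficulty that makes the statement a conjecture in the paper rather than a theorem, and leaves that difficulty unresolved.
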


Dmitriy M. Stolyarov

Michigan State University, Department of Mathematics;

P. L. Chebyshev Research Laboratory, St. Petersburg State University;

St. Petersburg Department of Steklov Mathematical Institute, Russian Academy of Sciences (PDMI RAS).

\medskip

dms at pdmi dot ras dot ru, dms at math dot msu dot edu.

\medskip

http://www.chebyshev.spb.ru/DmitriyStolyarov.

\end{document}